\def\input@path{{\string"/Users/russw/Documents/Research/mypapers/Coset posets are noncontractible/\string"/}}
\numberwithin{equation}{section}
\numberwithin{figure}{section}
\theoremstyle{plain}
\newtheorem{thm}{\protect\theoremname}[section]
  \theoremstyle{plain}
  \newtheorem{cor}[thm]{\protect\corollaryname}
  \theoremstyle{remark}
  \newtheorem{rem}[thm]{\protect\remarkname}
  \theoremstyle{plain}
  \newtheorem{lem}[thm]{\protect\lemmaname}
  \theoremstyle{definition}
  \newtheorem{defn}[thm]{\protect\definitionname}
  \providecommand{\corollaryname}{Corollary}
  \providecommand{\definitionname}{Definition}
  \providecommand{\lemmaname}{Lemma}
  \providecommand{\remarkname}{Remark}
\providecommand{\theoremname}{Theorem}
\begin{document}
\global\long\def\cosetlat{\mathcal{C}}

\global\long\def\sglat{\mathcal{L}}

\global\long\def\rec{\tilde{\chi}}

\global\long\def\Aut{\operatorname{\mathsf{Aut}}}

\global\long\def\Out{\operatorname{\mathsf{Out}}}

\global\long\def\Inn{\operatorname{\mathsf{Inn}}}

\global\long\def\hol{\operatorname{\mathsf{Hol}}}

\global\long\def\diag{\mathrm{diag}}

\global\long\def\ff{\mathbb{F}}

\global\long\def\zz{\mathbb{Z}}

\global\long\def\normalin{\mathrel{\lhd}}

\global\long\def\normalineq{\mathrel{\unlhd}}

\global\long\def\semidirect{\rtimes}

\global\long\def\comma{{,}}

\global\long\def\join{\mathbin{\ast}}

\global\long\def\homol{\widetilde{H}}

\global\long\def\LL{\mathcal{L}}

\global\long\def\kone{\mathcal{K}_{1}}

\global\long\def\ktwo{\mathcal{K}_{2}}

\global\long\def\kboth{\kone\cup\ktwo}

\title[Coset posets are not contractible]{Order complexes of coset posets of finite groups are not contractible}

\author{John Shareshian and Russ Woodroofe}

\thanks{The first author was supported in part by NSF Grants DMS-0902142
and DMS-1202337.}

\address{Department of Mathematics, Washington University in St.~Louis, St.~Louis,
MO, 63130}

\email{shareshi@math.wustl.edu}

\address{Department of Mathematics \& Statistics, Mississippi State University,
Starkville, MS 39762}

\email{rwoodroofe@math.msstate.edu}
\begin{abstract}
We show that the order complex of the poset of all cosets of all proper
subgroups of a finite group $G$ is never $\ff_{2}$-acyclic and therefore
never contractible. This settles a question of K.~S.~Brown.
\end{abstract}
\maketitle

\section{Introduction}

We settle a question asked by K.~S.~Brown in \cite{Brown:2000}.
For a group $G$, $\cosetlat(G)$ will denote the poset of all cosets
of all proper subgroups of $G$, ordered by inclusion. For a poset
$P$, $\Delta P$ will denote the order complex of $P$. Other terms
used but not defined in this introduction are defined in Section~\ref{sec:Pre}. 
\begin{thm}
\label{thm:Main} If $G$ is a finite group, then $\Delta\cosetlat(G)$
is not $\ff_{2}$-acyclic, and therefore is not contractible.
\end{thm}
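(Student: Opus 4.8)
The plan is to induct on $|G|$, handling solvable groups by an Euler characteristic computation and reducing the general case to smaller groups via Smith theory applied to the left-translation action.

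First I would dispose of every $G$ with $\rec(\Delta\cosetlat(G)) \neq 0$. Since the Euler characteristic is computed from the dimensions of the chain groups, it is independent of the coefficient field, so a nonzero value already forces $\homol_{*}(\Delta\cosetlat(G);\ff_{2}) \neq 0$, and we are done. By Brown's formula $\rec(\Delta\cosetlat(G)) = -P(G,-1)$, where $P(G,s) = \sum_{H \le G}\mu(H,G)\,[G:H]^{-s}$ is the probabilistic zeta function (the overall sign being irrelevant here). For solvable $G$ this function is known to factor as a product, over the chief factors of a chief series, of terms that do not vanish at $s=-1$; hence $\rec \neq 0$. In particular, by the Feit--Thompson theorem every group of odd order is solvable and is thereby already handled, so from now on we may assume $G$ is nonsolvable, whence $|G|$ is even and $G$ contains nontrivial $2$-subgroups.

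For the inductive step I would exploit that each $g \in G$ acts on $\cosetlat(G)$ as the poset automorphism $kH \mapsto gkH$, inducing an admissible simplicial action on $\Delta\cosetlat(G)$ (a poset automorphism fixing a chain setwise fixes it pointwise). If $\Delta\cosetlat(G)$ were $\ff_{2}$-acyclic, then by Smith theory the fixed subcomplex of any $2$-subgroup $Q \le G$ would also be $\ff_{2}$-acyclic. The fixed cosets are easily described: $gH$ is $Q$-invariant iff $g^{-1}Qg \le H$. When $N = O_{2}(G)$ is a nontrivial normal $2$-subgroup this reads simply $N \le H$, and $gH \mapsto (gN)(H/N)$ is a poset isomorphism from the fixed poset onto $\cosetlat(G/N)$. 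Thus $\ff_{2}$-acyclicity of $\Delta\cosetlat(G)$ would imply the same for $\Delta\cosetlat(G/N)$, contradicting the inductive hypothesis since $|G/N| < |G|$ (the base case $G \cong \zz/2\zz$ gives $\Delta\cosetlat(G) \cong S^{0}$, with $\homol_{0}(S^{0};\ff_{2}) \neq 0$). This settles every $G$ with $O_{2}(G) \neq 1$.

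The crux is therefore a nonsolvable $G$ with trivial $2$-core $O_{2}(G) = 1$: no normal $2$-subgroup is available, and for a non-normal $Q$ the fixed poset $\{gH : g^{-1}Qg \le H\}$ is genuinely unlike a coset poset of a quotient. The example $G = S_{3}$ with $Q$ generated by a transposition is encouraging: there the fixed poset is a disconnected antichain of cosets of order-$2$ subgroups, so $\homol_{0} \neq 0$ over $\ff_{2}$, exactly contradicting Smith-theoretic acyclicity. So I would use the structure theory of nonsolvable groups with trivial $2$-core --- passing to a minimal normal subgroup, which is a product of nonabelian simple groups or an elementary abelian $p$-group for odd $p$, and exploiting the rich $2$-local and self-normalizing behavior of its $2$-subgroups --- to select a $2$-subgroup $Q$ whose fixed poset is provably disconnected, or failing that carries nonzero $\ff_{2}$-homology in some degree. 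Pinning down the homotopy type of these fixed posets and guaranteeing a uniformly good choice of $Q$ for all such $G$ is where I expect the essential difficulty, and the bulk of the group-theoretic work, to lie.
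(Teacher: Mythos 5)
Your reductions are fine as far as they go: nonzero reduced Euler characteristic does force nonvanishing $\ff_{2}$-homology, Gasch\"utz does give $P_{G}(-1)\neq0$ for solvable $G$, and your Smith-theoretic step for $O_{2}(G)\neq1$ is correct, since the fixed poset of the left-translation action of $N=O_{2}(G)$ is indeed isomorphic to $\cosetlat(G/N)$ and the fixed complex of a poset action is the order complex of the fixed poset. But you have explicitly left open the crux --- nonsolvable $G$ with $O_{2}(G)=1$, which includes every nonabelian simple group --- and that is where essentially all of the content of the theorem lives. Your proposed route for this case is also the wrong tool: with a single $2$-subgroup $Q$ acting by left translation on $\cosetlat(G)$, the fixed poset $\{gH: Q^{g}\leq H\}$ is never empty (take $H$ maximal containing $Q$ and the coset $H$ itself), so you would be forced to prove non-acyclicity of these fixed posets directly, and for $Q$ a Sylow $2$-subgroup of a simple group this poset of cosets of odd-index subgroups is not easier to control than the original complex. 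There is no indication that a uniform choice of $Q$ with provably non-acyclic fixed poset exists.

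The paper avoids this dead end with three ideas absent from your sketch. First, Brown's join theorem: $\Delta\cosetlat(G)\simeq\Delta\cosetlat(G/N)\join\Delta\cosetlat(G,N)$ for $N$ a minimal normal subgroup, where $\cosetlat(G,N)=\{Hx:HN=G\}$; by the K\"unneth formula for joins, induction reduces everything to showing $\Delta\cosetlat(G,N)$ is not $\ff_{2}$-acyclic. Second, \emph{two-sided} actions: subgroups $P\times K\leq G\times G$ acting by left and right translation, where $Hx$ is fixed iff $\langle P,K^{x^{-1}}\rangle\leq H$; choosing $P$ a Sylow $2$-subgroup of $N$ and $K$ a diagonal subgroup that ``universally $2$-generates'' each simple factor makes the fixed-point set on $\cosetlat(G,N)$ \emph{empty}, so only the weak form of Smith theory (nonempty fixed sets on acyclic complexes) is needed --- no homology of fixed posets must be computed. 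Third, since $P\times K$ with $K$ cyclic of odd order is not a $2$-group, plain Smith theory does not apply; the paper needs Oliver's extension to groups with a normal series ($p$-group)--(cyclic)--($r$-group). Verifying universal $2$-generation uses the Classification (Jordan's theorem for alternating groups, Damian--Lucchini plus Liebeck--Saxl for Lie type and sporadic groups), and $A_{7}$ fails outright and requires a separate argument extending an automorphism of $N\cong A_{7}^{t}$ to $G$ via Jin's theorem. None of this machinery is recoverable from the framework you set up, so the proposal, while sound in its preliminary steps, has a genuine gap exactly at the case you flagged.
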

With some explicitly stated exceptions, the groups, partially ordered
sets and simplicial complexes considered herein are assumed to be
finite. We assume some familiarity with topological combinatorics
(see for example \cite{Bjorner:1995,Wachs:2007}), along with the
rudiments of algebraic topology (see for example \cite{Hatcher:2002,Munkres:1984})
and group theory (see for example \cite{Aschbacher:2000,Dixon/Mortimer:1996}).

\subsection{History and motivation}

The topology of $\Delta\cosetlat(G)$ was studied by Brown in \cite{Brown:2000}.
More general coset complexes were studied from a somewhat different
point of view by Abels and Holz in \cite{Abels/Holz:1993}. However,
from our perspective (and that of Brown), the story begins with the
work of P.~Hall, who in \cite{Hall:1936} introduced generalized
Möbius inversion in order to enumerate generating sequences. Hall
considered the probability $P_{G}(k)$ that a $k$-tuple $(g_{1},\ldots,g_{k})$
of elements of a group $G$, chosen uniformly with replacement, includes
a generating set for $G$. He showed that 
\[
P_{G}(k)=\sum_{H\leq G}\mu(H,G)[G:H]^{-k},
\]
where $\mu$ is the Möbius function on the subgroup lattice of $G$.
(We mention that Weisner introduced generalized Möbius inversion independently
in \cite{Weisner:1935a}. See \cite[Chapter 3]{Stanley:2012} for
a comprehensive discussion of this theory.) 

Bouc observed that $-P_{G}(-1)$ is the reduced Euler characteristic
$\rec(\Delta\cosetlat(G))$. Indeed, Hall showed in \cite{Hall:1936}
that if $\widehat{P}$ is obtained from $P$ by adding a minimum element
$\hat{0}$ and a maximum element $\hat{1}$, then 
\[
\rec(\Delta P)=\mu_{\widehat{P}}(\hat{0},\hat{1}).
\]
A straightforward computation shows that 
\[
\mu_{\widehat{\cosetlat(G)}}(\hat{0},\hat{1})=-P_{G}(-1).
\]
This led to Brown's work, in which he obtained divisibility results
for $P_{G}(-1)$ using group actions on $\Delta\cosetlat(G)$.

Brown found no group $G$ for which $P_{G}(-1)=0$. As the reduced
Euler characteristic of a contractible complex is zero, the question
of contractibility arises naturally. Previous progress on this question
involved showing that $P_{G}(-1)\neq0$. Gaschütz showed in \cite[Satz 2]{Gaschutz:1959}
that $P_{G}(-1)\neq0$ when $G$ is solvable. (Brown refined this
result by calculating the homotopy type of $\Delta\cosetlat(G)$ for
a solvable group $G$ in \cite[Proposition 11]{Brown:2000}.) Patassini
proved $P_{G}(-1)\neq0$ for many almost simple groups $G$ in \cite{Patassini:2009,Patassini:2011}.
He obtained further results for some groups with minimal normal subgroups
that are products of alternating groups in \cite{Patassini:2013}.
The question of whether $P_{G}(-1)$ is nonzero for all (finite) $G$
remains open. 

Abels and Holz consider in \cite{Abels/Holz:1993} a more general
class of posets. Let $G$ be a (possibly infinite) group and $\mathcal{H}$
be a collection of proper subgroups of $G$ that is closed under intersection.
Abels and Holz study the order complex of the poset $\cosetlat_{\mathcal{H}}(G)$
of all cosets of all subgroups in $\mathcal{H}$. In their Theorem~2.4,
they describe relations between connectivity properties of $\Delta\cosetlat_{\mathcal{H}}(G)$
and the structure of $G$. Our Theorem~\ref{thm:Main} says that
$\Delta\cosetlat_{\mathcal{H}}(G)$ is not infinitely connected when
$G$ is finite and $\mathcal{H}$ contains all proper subgroups of
$G$. In contrast, Ramras in \cite[Remark 2.4]{Ramras:2005} noticed
that $\Delta\cosetlat(G)$ is contractible when $G$ is not finitely
generated.

Some other papers on the topology of $\Delta\cosetlat(G)$ are \cite{Torres-Giese:2012,Woodroofe:2007,Woodroofe:2009b}.

\subsection{A brief description of our proof}

Our proof has three main ingredients, namely, a ``join theorem''
of Brown, the Classification of Finite Simple Groups, and P.~A.~Smith
Theory. 

Brown showed that, given a group $G$ and normal subgroup $N$, there
is a subposet $\cosetlat(G,N)$ of $\cosetlat(G)$ such that $\Delta\cosetlat(G)$
is homotopy equivalent to the join $\Delta\cosetlat(G,N)\join\Delta\cosetlat(G/N)$.
This result allows us to use induction on $|G|$. We complete the
proof by showing that $\Delta\cosetlat(G,N)$ is not $\ff_{2}$-acyclic
when $N$ is a minimal normal subgroup of $G$. Such a subgroup is
a direct product of pairwise isomorphic simple groups. 

In order to show $\Delta\cosetlat(G,N)$ is not $\ff_{2}$-acyclic,
we use Smith Theory and the Classification. For each possible minimal
normal subgroup $N$, we describe a group $E$ such that $E$ acts
on $\cosetlat(G,N)$ with no fixed point. Using results of Smith and
Oliver, we choose $E$ so as to preclude a fixed-point-free action
on an $\ff_{2}$-acyclic complex.


\subsection{Further comments}

\subsubsection{}

Theorem~\ref{thm:Main} stands in clear contrast to other results
on order complexes of posets naturally associated to finite groups.
Consider the poset $\sglat(G)$ of nontrivial proper subgroups of
$G$, ordered by inclusion. The complex $\Delta\sglat(G)$ is contractible
for many groups, including all those with nontrivial Frattini subgroup.

Next, let $p$ be a prime. Consider the subposet $\mathcal{S}_{p}(G)$
of $\mathcal{L}(G)$ consisting of all $p$-subgroups and the subposet
$\mathcal{A}_{p}(G)$ of $\mathcal{S}_{p}(G)$ consisting of all elementary
abelian $p$-subgroups. The order complexes $\Delta\mathcal{S}_{p}(G)$
and $\Delta\mathcal{A}_{p}(G)$ were first studied, respectively,
by Brown in \cite{Brown:1974,Brown:1975} and Quillen in \cite{Quillen:1978}.
These two complexes are homotopy equivalent. They are contractible
when $G$ has a nontrivial normal $p$-subgroup. The converse of this
last statement is a well-known conjecture of Quillen, see \cite[Conjecture 2.9]{Quillen:1978}.

\subsubsection{}

The identity $-P_{G}(-1)=\rec(\Delta\cosetlat(G))$ can be considered
to be an example of the phenomenon of \emph{combinatorial reciprocity}.
Often some objects of interest are counted by evaluating an appropriate
function at positive integers. Combinatorial reciprocity occurs when
evaluation of the same function at negative integers counts some closely
related objects. Combinatorial reciprocity is discussed in \cite{Beck:2012,Stanley:1974a,Stanley:1974b,Stanley:2012}.
At this point we know of no interesting interpretation of $P_{G}(-n)$
for integers $n>1$. More generally, one can evaluate $P_{G}$ at
any complex number $s$. The study of $P_{G}(s)$ as a complex function
was initiated by Boston in \cite{Boston:1996} and Mann in \cite{Mann:1996},
and was continued by various authors. See for example \cite{Damian/Lucchini:2007,Damian/Lucchini:2014,Damian/Lucchini/Morini:2004,Detomi/Lucchini:2003,Patassini:2009,Patassini:2011b,Patassini:2013b,Shareshian:1998}.

\subsubsection{}

In Lemma~\ref{lem:UniversalGeneration}~(3) below, we note that
if $L$ is a finite simple group of Lie type or a sporadic simple
group, then there exists some odd prime $p$ such that $L=\langle P,R\rangle$
whenever $P$ is a Sylow $p$-subgroup of $L$ and $R$ is a Sylow
$2$-subgroup of $L$. This property need not hold when $L$ is an
alternating group. However, one can ask whether for each $n$ there
exist primes $p=p(n)$ and $r=r(n)$ such that $\left\langle P,R\right\rangle =A_{n}$
whenever $P$ is a Sylow $p$-subgroup and $R$ is a Sylow $r$-subgroup
of $A_{n}$. This interesting question remains open. It is related
to a question raised by Dolfi, Guralnick, Herzog and Praeger in \cite[Section 6]{Dolfi/Guralnick/Herzog/Praeger:2012}.
These authors ask whether for each $n$ there exist conjugacy classes
$C,D$ in $A_{n}$ consisting of elements of prime-power order, such
that $\left\langle c,d\right\rangle =A_{n}$ for all $(c,d)\in C\times D$.
A positive answer to their question immediately implies a positive
answer to ours. We will address related questions in a forthcoming
paper.

\subsubsection{}

To our knowledge, the first use of Smith Theory in combinatorics appears
in work of Kahn, Saks and Sturtevant. In the paper \cite{Kahn/Saks/Sturtevant:1984},
these authors use the work of Smith and Oliver mentioned above to
obtain a striking result about computational complexity.

\subsection{Contents of the paper}

In Section~\ref{sec:Pre}, we introduce some basic facts and definitions.
In Section~\ref{sec:MainThm}, we reduce the proof of Theorem~\ref{thm:Main}
to some claims about nonabelian finite simple groups. In the remaining
sections, we use the Classification to prove these claims. In Sections~\ref{sec:Alternating}
and \ref{sec:A7}, we prove the required result for alternating groups.
The group $A_{7}$ requires more care than the other alternating groups.
In Section~\ref{sec:LieSporadic}, we handle sporadic groups and
groups of Lie type.

\section*{Acknowledgements}

We thank Anders Björner, Bob Guralnick, Andrea Lucchini, Richard Lyons,
and Massimiliano Patassini for helpful discussions and correspondence.
The first author was inspired to think about this problem during a
visit to the Università degli Studi di Padova in Spring 2011. He thanks
the mathematics department there, and in particular Andrea Lucchini
and Eloisa Detomi, for their gracious hospitality. The second author
is grateful to the University of Primorska for their hospitality (with
support from the European Social Fund and the Ministry of Education,
Science and Sport of the Republic of Slovenia) during his visit of
Spring 2014.

\section{\label{sec:Pre}Preliminaries}

Here we introduce some basic definitions and facts. A reader who is
familiar with topological combinatorics and group theory can skip
this section safely, and refer to it as necessary.

\subsection{Groups and cosets}

As is standard, we write $K^{g}$ for $g^{-1}Kg$ whenever $K\subseteq G$
and $g\in G$, and write $x^{g}$ for $g^{-1}xg$. Similarly, we write
$K^{\alpha}$ and $x^{\alpha}$ for the images of $K$ and $x$ under
an automorphism $\alpha$ of $G$.

When referring to a coset, we mean a right coset. This causes no loss
of generality, as every coset of every subgroup of $G$ is a right
coset of some subgroup. Indeed, $xH=H^{x^{-1}}x$. It is not hard
to see that every coset is a right coset of a unique subgroup.

\subsection{\label{sec:PreSimplicial}Simplicial complexes}

An \textit{abstract simplicial complex} is a collection $\Delta$
of sets (called \textit{faces}) such that if $S\in\Delta$ and $T\subseteq S$
then $T\in\Delta$. We make no distinction between an abstract simplicial
complex and its geometric realization.

Let $P$ be a (finite) poset. The \textit{order complex} $\Delta P$
is the simplicial complex whose $k$-dimensional faces are the chains
of length $k$ (size $k+1$) in $P$.

If $\Delta$ and $\Gamma$ are simplicial complexes on disjoint vertex
sets, the \textit{join} $\Delta\join\Gamma$ is the complex whose
faces are all sets $S\cup T$ such that $S\in\Delta$ and $T\in\Gamma$. 

Associated to a simplicial complex $\Delta$ and a ring $R$ are the
\textit{reduced simplicial homology groups} $\homol_{i}(\Delta;R)$,
as described (for example) in \cite{Munkres:1984}. A complex $\Delta$
is called $R$-\textit{acyclic} if $\homol_{i}(\Delta;R)=0$ for every
integer $i$. Every contractible complex is $R$-acyclic for all $R$.
Every nonempty $R$-acyclic complex has at least one nonempty face.
Indeed, $\homol_{-1}(\{\emptyset\};R)\cong R$, hence the complex
$\{\emptyset\}$ is not acyclic over any ring $R$. The simplicial
complexes that we consider all contain the empty face $\emptyset$.

\section{\label{sec:MainThm}Proof of Theorem~\ref{thm:Main}}

Here we prove Theorem~\ref{thm:Main}, although we defer the proofs
of some key lemmas on simple groups to later sections. Let us first
collect some main ingredients in the proof.

\subsection{\label{sec:BrownJoin}Brown's Join Theorem for $\Delta\protect\cosetlat(G)$}

Given a normal subgroup $N$ of $G$, we define the \textit{relative
coset poset }\textit{\emph{to be}} 
\[
\cosetlat(G,N):=\{Hx\in\cosetlat(G):HN=G\}.
\]
The next result, due to Brown, is key to our proof.
\begin{thm}[{Brown's Join Theorem \cite[Proposition 10]{Brown:2000}}]
 \label{thm:BrownJoin} If $G$ is a group and $N$ is a normal subgroup
of $G$, then $\Delta\cosetlat(G)$ is homotopy equivalent to $\Delta\cosetlat(G/N)\join\Delta\cosetlat(G,N)$.\end{thm}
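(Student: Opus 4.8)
The plan is to realize both $\cosetlat(G/N)$ and $\cosetlat(G,N)$ as complementary pieces of the poset $P:=\cosetlat(G)$ and then build a single order-preserving map onto their ordinal sum whose fibers are governed by Quillen's Fiber Lemma (see \cite{Bjorner:1995}). First I would identify $\cosetlat(G/N)$ with the subposet $A:=\{Hx\in P: N\le H\}$ of cosets of proper subgroups containing $N$, via $Hx\mapsto (H/N)(xN)$, and write $B:=\cosetlat(G,N)=\{Hx: HN=G\}$. The basic combinatorial observation is that $P$ is the disjoint union of the order ideal $P_{<}:=\{Hx: HN<G\}$ and the filter $B$, with $A\subseteq P_{<}$; that $P_{<}$ is a down-set and $B$ an up-set both follow from $H\le K\Rightarrow HN\le KN$. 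Since the order complex of an ordinal sum is the join of the order complexes, $\Delta(A\oplus B)=\Delta A\join\Delta B$ (with $A\oplus B$ placing all of $A$ below all of $B$), so it suffices to produce a homotopy equivalence $\Delta P\simeq\Delta(A\oplus B)$.

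Next I would exploit the map $c\colon P_{<}\to A$ given by $c(Hx)=(HN)x$. Because $N$ is normal this is well defined and order-preserving, it is idempotent, and $Hx\le c(Hx)$; thus $c$ is a closure operator with image $A$, and the standard consequence is that it induces a homotopy equivalence $\Delta P_{<}\simeq\Delta A$. I would then define a poset map $\Phi\colon P\to A\oplus B$ by $\Phi|_{P_{<}}=c$ and $\Phi|_{B}=\mathrm{id}$. Monotonicity holds on each block separately, and across blocks it is automatic: an element of $P_{<}$ can only lie below an element of $B$, and in $A\oplus B$ every element of $A$ already lies below every element of $B$. The whole problem now reduces to checking the hypothesis of the Fiber Lemma, namely that each lower fiber $\Phi^{-1}((A\oplus B)_{\le q})$ is contractible.

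For $q=Ky\in A$ the lower fiber is exactly the principal ideal $P_{\le Ky}$ of sub-cosets of $Ky$, which has maximum $Ky$ and so is contractible. The real work is the case $q=Hx\in B$, where the fiber equals $F:=P_{<}\cup P_{\le Hx}$, a union of two ideals. Here I would first note that there are no comparabilities between $P_{<}\setminus P_{\le Hx}$ and $P_{\le Hx}\setminus P_{<}$, so every chain of $F$ lies in one ideal or the other; hence $\Delta F=\Delta P_{<}\cup\Delta P_{\le Hx}$ glued along $\Delta(P_{<}\cap P_{\le Hx})$. The piece $\Delta P_{\le Hx}$ is contractible (maximum $Hx$), so by the gluing lemma it is enough to prove that the inclusion $P_{<}\cap P_{\le Hx}\hookrightarrow P_{<}$ is a homotopy equivalence.

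I expect this last inclusion to be the main obstacle, and I would settle it by a second application of the Fiber Lemma together with the closure operator. Restricting $c$ to $P_{<}\cap P_{\le Hx}$, the lower fiber over $Lz\in A$ is the poset of sub-cosets of $Hx\cap Lz$. The decisive group-theoretic point is that this intersection is never empty: since $N\le L$ and $HN=G$ we have $HL\supseteq HN=G$, so $Hx\cap Lz\neq\emptyset$; the fiber then has a maximum and is contractible. Thus $c$ restricts to a homotopy equivalence $\Delta(P_{<}\cap P_{\le Hx})\simeq\Delta A$, and comparing this with $\Delta P_{<}\simeq\Delta A$ through the two-out-of-three property identifies the inclusion above as an equivalence. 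Consequently every lower fiber of $\Phi$ is contractible, $\Phi$ is a homotopy equivalence, and $\Delta\cosetlat(G)\simeq\Delta A\join\Delta B=\Delta\cosetlat(G/N)\join\Delta\cosetlat(G,N)$, as required.
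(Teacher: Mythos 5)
The paper does not actually prove this statement: Theorem~\ref{thm:BrownJoin} is imported from Brown \cite[Proposition 10]{Brown:2000} as a black box, so there is no internal proof to compare your argument against, and the relevant comparison is with Brown's original fiber-type argument, which yours essentially reconstructs. Judged on its own, your proof is correct and complete. The decomposition $P=P_{<}\sqcup B$ with $A\subseteq P_{<}$, the closure operator $c(Hx)=(HN)x$ on $P_{<}$ (well defined exactly because $N\normalineq G$ and $HN<G$ there), and the monotone map $\Phi\colon P\to A\oplus B$ are all sound; note that mapping to the \emph{abstract} ordinal sum is more or less forced, since no element of $A$ is comparable in $P$ to any element of $B$ (if $Hx\subseteq Ky$ with $N\le H$ and $KN=G$ then $N\le K$ gives $K=G$; the reverse containment gives $H\ge KN=G$), so the join never sits inside $\Delta\cosetlat(G)$ as a subcomplex. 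Each fiber check goes through: over $Ky\in A$ the fiber is the principal ideal $P_{\le Ky}$, since every subcoset of $Ky$ automatically lies in $P_{<}$ (as $N\le K$); over $Hx\in B$ your fiber $P_{<}\cup P_{\le Hx}$ has no comparabilities between $P_{<}\setminus P_{\le Hx}$ and $P_{\le Hx}\setminus P_{<}$ (both sets are order ideals), so the gluing and two-out-of-three reductions are valid, with the homotopy pushout of $\Delta P_{<}\leftarrow\Delta(P_{<}\cap P_{\le Hx})\rightarrow\Delta P_{\le Hx}$ collapsing to the contractible cone on $Hx$ once the left map is an equivalence. The crux is exactly where you locate it: $Hx\cap Lz\neq\emptyset$ whenever $HN=G$ and $N\le L$, because $HL\supseteq HN=G$. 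One small point worth making explicit is the standard fact you use silently afterwards: a nonempty intersection of two cosets is a single coset of $H\cap L$, and $H\cap L<G$ since $L<G$, so $Hx\cap Lz$ itself belongs to $\cosetlat(G)$ and is the maximum of the fiber of the restricted closure map. With that sentence added, the argument is a complete, self-contained proof of Brown's Join Theorem.
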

\begin{cor}
\label{cor:kun} Let $p$ be a prime. Let $N$ be a normal subgroup
of $G$. The complex $\Delta\cosetlat(G)$ is $\ff_{p}$-acyclic if
and only if at least one of $\Delta\cosetlat(G,N)$ and $\Delta\cosetlat(G/N)$
is $\ff_{p}$-acyclic. \end{cor}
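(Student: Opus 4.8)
The plan is to deduce this directly from Brown's Join Theorem (Theorem~\ref{thm:BrownJoin}) together with the Künneth formula for joins. First I would observe that homotopy equivalent complexes have isomorphic reduced homology groups, so Theorem~\ref{thm:BrownJoin} reduces the question to a purely topological one about the join $\Delta\cosetlat(G/N)\join\Delta\cosetlat(G,N)$. Writing $\Delta$ and $\Gamma$ for the two join factors, the whole corollary becomes the assertion that $\Delta\join\Gamma$ is $\ff_p$-acyclic if and only if $\Delta$ or $\Gamma$ is.

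Next I would invoke the reduced Künneth formula for joins. A face of $\Delta\join\Gamma$ is a disjoint union of a face of $\Delta$ and a face of $\Gamma$, and, tracking the empty faces on both sides, one sees that the augmented (reduced) chain complex of the join is, up to a degree shift by one, the tensor product of the augmented chain complexes of $\Delta$ and $\Gamma$. Since $\ff_p$ is a field, all Tor terms in the algebraic Künneth theorem vanish, and the degree-shift bookkeeping yields
\[
\homol_n(\Delta\join\Gamma;\ff_p)\cong\bigoplus_{i+j=n-1}\homol_i(\Delta;\ff_p)\otimes_{\ff_p}\homol_j(\Gamma;\ff_p).
\]

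With this formula in hand both directions are short. For the forward direction, if (say) $\Delta$ is $\ff_p$-acyclic then every factor $\homol_i(\Delta;\ff_p)$ vanishes, so every summand on the right vanishes and $\Delta\join\Gamma$ is $\ff_p$-acyclic. For the converse I would argue contrapositively: if neither factor is acyclic, choose $i_0$ and $j_0$ with $\homol_{i_0}(\Delta;\ff_p)\neq0$ and $\homol_{j_0}(\Gamma;\ff_p)\neq0$. Setting $n=i_0+j_0+1$, the summand indexed by $(i_0,j_0)$ is a tensor product of two nonzero $\ff_p$-vector spaces, hence nonzero; since the right-hand side is a direct sum there is no cancellation, so $\homol_n(\Delta\join\Gamma;\ff_p)\neq0$ and the join fails to be $\ff_p$-acyclic.

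The step requiring the most care is the bookkeeping in the Künneth formula rather than any genuine difficulty: one must get the degree shift right, and one must use the field hypothesis in two places, namely to kill the Tor correction term and to guarantee that a tensor product of nonzero coefficient modules stays nonzero (over $\zz$ this can fail). I would also note that, because every complex here contains the empty face, so that $\homol_{-1}(\{\emptyset\};\ff_p)\cong\ff_p$, the formula remains valid in the degenerate cases where one of the posets $\cosetlat(G,N)$ or $\cosetlat(G/N)$ is empty, and no separate treatment of those cases is then needed.
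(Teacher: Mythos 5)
Your proof is correct and takes exactly the same route as the paper, which deduces the corollary in one line from Theorem~\ref{thm:BrownJoin} together with the K\"unneth formula for joins over $\ff_{p}$. You have simply made explicit the details the paper leaves implicit: the degree shift in the join formula, the vanishing of Tor over a field, and the fact that a tensor product of nonzero $\ff_{p}$-vector spaces is nonzero (which is what makes the contrapositive direction work), all of which are handled correctly, including the degenerate case of the complex $\{\emptyset\}$.
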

\begin{proof}
The result follows immediately from Theorem~\ref{thm:BrownJoin}
and the Künneth Formula for joins (see for example \cite[(9.12)]{Bjorner:1995}). 
\end{proof}
We see now that Theorem~\ref{thm:Main} follows quickly from the
next result.
\begin{thm}
\label{thm:MainIndStep} If $N$ is a minimal normal subgroup of $G$,
then $\Delta\cosetlat(G,N)$ is not $\ff_{2}$-acyclic. \end{thm}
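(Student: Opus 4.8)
The plan is to exhibit, for each minimal normal subgroup $N$ of $G$, a nontrivial group $E$ acting on the poset $\cosetlat(G,N)$ without fixed points, where $E$ is chosen so that Smith Theory forbids any such fixed-point-free action on an $\ff_2$-acyclic complex. Concretely, P.~A.~Smith's theorem says that if a finite $p$-group acts on an $\ff_p$-acyclic complex, then the fixed-point set is again $\ff_p$-acyclic, hence nonempty; and Oliver's refinement handles certain non-$p$-group $E$ (built from a $p$-group extended by another group) by bounding an obstruction that must vanish for a fixed-point-free action on an acyclic complex. So the overall strategy is: reduce noncontractibility to the existence of a suitable $E$, then build $E$ from the group-theoretic structure of $N$.

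Since $N$ is a minimal normal subgroup, $N$ is a direct product $T_1\times\cdots\times T_k$ of pairwise isomorphic simple groups $T$. The first case split is on whether $T$ is abelian (so $T\cong\zz_p$ and $N$ is elementary abelian) or nonabelian. In the abelian case I would look for a $p$-group $E$ acting fixed-point-freely: the natural candidate is a group acting on the cosets via translation and conjugation in a way that moves every coset $Hx$ with $HN=G$. Here the constraint $HN=G$ is what gives us room, since it forces $H$ to interact nontrivially with $N$, and one can use elements of $N$ (or of a complement) together with automorphisms to ensure no coset is fixed. In the nonabelian case the relevant structural input is the ``universal generation'' statement flagged in the introduction (Lemma on simple groups): for $L$ a simple group of Lie type or sporadic, there is an odd prime $p$ with $L=\langle P,R\rangle$ for every Sylow $p$-subgroup $P$ and Sylow $2$-subgroup $R$. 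This should let me take $E$ to be generated by appropriate Sylow-type subgroups acting on the factors, arranged so that fixing a coset would force a proper subgroup $H$ with $HN=G$ to contain a generating configuration, contradicting properness.

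The mechanism for translating ``fixed point of $E$ on $\cosetlat(G,N)$'' into a group-theoretic statement is this: a coset $Hx$ is fixed by an element $g$ (acting by right translation or by conjugation, depending on how $E$ embeds) precisely when $g$ normalizes or stabilizes $Hx$, which pins $H$ down relative to $N$. If $E$ has a common fixed coset $Hx$, then $H$ must be simultaneously compatible with all of $E$; the generation hypotheses are engineered so that this compatibility forces $HN\neq G$ or $H=G$, either of which contradicts membership in $\cosetlat(G,N)$. Thus no fixed coset exists, and Smith--Oliver yields non-$\ff_2$-acyclicity. I would set up $E$ so that it is either a $2$-group (to invoke Smith directly over $\ff_2$) or one of Oliver's admissible groups, which is why the prime $2$ appears in the statement and why the odd prime $p$ in the universal generation lemma is paired with the Sylow $2$-subgroup.

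The main obstacle, and the reason the paper devotes entire sections to it, is the alternating groups: the clean universal generation statement fails for $A_n$ in general (as the introduction explicitly warns), and $A_7$ is singled out as needing separate treatment. So the hard part will be handling the case $T\cong A_n$, where I cannot simply quote a Sylow-$p$/Sylow-$2$ generation fact and must instead find an alternative $E$ and an alternative argument ensuring fixed-point-freeness; verifying that the chosen $E$ genuinely has no fixed coset, and that it falls within the scope of Smith or Oliver's theorems, is where the real work lies. A secondary subtlety is bookkeeping across the $k$ simple factors of $N$ when $k>1$: the action of $E$ must move cosets compatibly with the product structure, and one must ensure the constraint $HN=G$ is used correctly for the diagonal-type subgroups that arise. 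Once the simple-group inputs are in hand, assembling $E$ and invoking Smith--Oliver should be comparatively routine.
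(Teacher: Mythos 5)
Your framework is the paper's framework: reduce to a fixed-point-free action on $\cosetlat(G,N)$ of a group $E$ admissible for Smith--Oliver, built from a Sylow $2$-subgroup $P$ of $N$ acting by left translation and a ``universally $2$-generating'' subgroup $K$ (embedded diagonally when $N=L^{t}$ with $t>1$) acting by right translation; the fixed-point criterion is $\langle P,K^{x^{-1}}\rangle\leq H$, and universal generation forces $N\leq H$, contradicting $H<G$ and $HN=G$. You also correctly identify why the odd prime is paired with the Sylow $2$-subgroup ($E=P\times K$ has the normal series required with $p=2$, so acting on an $\ff_{2}$-acyclic complex forces a fixed point). But the proposal defers exactly the content that constitutes the proof. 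For $A_{n}$ with $n\geq9$ no ``alternative $E$'' is needed: the paper shows that a cyclic $K=\langle h\rangle$ with $h$ an $n$-cycle ($n$ odd) or an $(n-1)$-cycle ($n$ even) universally $2$-generates $A_{n}$, by Jordan's theorem (a primitive group of degree $n\geq9$ containing a double transposition contains $A_{n}$) together with a binomial-coefficient parity computation ruling out imprimitive overgroups; so your claim that universal generation ``fails for $A_{n}$ in general'' conflates the Sylow-$p$-by-Sylow-$2$ version (which the introduction warns may fail) with the cyclic version the paper actually uses. The one case that genuinely cannot be handled by any $P\times K$ of the admissible shape is $A_{7}$, where proper subgroups isomorphic to $PGL_{3}(2)$ contain both a $7$-cycle and a full Sylow $2$-subgroup. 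The paper's essential new idea there --- which your plan lacks and explicitly flags as ``where the real work lies'' --- is to enlarge the acting group to $E=(P\times K)\semidirect\langle\theta\rangle$ inside $(G\times G)\semidirect\Aut(G)$, where $\theta\in\Aut(G)$ is an involution extending a componentwise conjugation by $(1,2)(3,4)(5,6)$ on $N=A_{7}^{t}$; the extension exists by Jin's theorem because this automorphism represents a central element of $\Out(N)\cong\mathbb{Z}_{2}\wr S_{t}$, and fixed-point-freeness uses that $\theta$ swaps the two conjugacy classes of $PGL_{3}(2)$-subgroups, so no $\theta$-invariant proper subgroup can contain both $P_{i}$ and $\langle h_{i}\rangle$. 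Without this (or some substitute), the theorem is unproved for $N\cong A_{7}^{t}$.

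A secondary gap is the abelian case, where your sketch names no concrete $E$ and no reason for fixed-point-freeness. It is repairable within your Smith-theoretic framework: if $N$ is abelian minimal normal, $HN=G$ and $H<G$ force $H\cap N=1$ (as $H\cap N$ is normalized by $H$ and by the abelian $N$, hence by $HN=G$), so for any nontrivial cyclic $C\leq N$ acting by right translation, a fixed coset $Hx$ would give $C^{x^{-1}}\leq H\cap N=1$, a contradiction, and a fixed-point-free cyclic action precludes $\ff_{2}$-acyclicity by the quoted theorem. The paper instead avoids Smith theory here altogether, citing Brown's observation that $\cosetlat(G,N)$ is an antichain with number of elements divisible by $|N|$, so that $\homol_{-1}$ or $\homol_{0}$ is nonzero over $\ff_{2}$ --- a strictly more elementary route. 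Your handling of $t>1$ (diagonal $K^{\diag}$, projections to the factors, Sylow conjugacy, simplicity of $L$ to recover each factor) is correctly flagged but likewise not carried out; that part is routine, as you say, but the $A_{7}$ construction is not, and it is the missing idea.
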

\begin{proof}[Proof (of Theorem~\ref{thm:Main}, assuming Theorem~\ref{thm:MainIndStep}).]
 We proceed by induction on the order of $G$. If $G=1$, then $\Delta\cosetlat(G)=\{\emptyset\}$.
Now assume $\left|G\right|>1$, and let $N$ be a minimal normal subgroup
of $G$. The complex $\Delta\cosetlat(G/N)$ is not $\ff_{2}$-acyclic
by inductive hypothesis and $\Delta\cosetlat(G,N)$ is not $\ff_{2}$-acyclic
by Theorem~\ref{thm:MainIndStep}. Theorem~\ref{thm:Main} now follows
from Corollary~\ref{cor:kun}.
\end{proof}
It remains to prove Theorem~\ref{thm:MainIndStep}. In the rest of
Section~\ref{sec:MainThm}, we show how to reduce the proof to certain
claims about finite simple groups.

\subsection{\label{sec:SmithTheory}Group actions and Smith theory}

In order to prove Theorem~\ref{thm:MainIndStep}, we use Smith Theory. 

Given a group $E$ acting by automorphisms (order preserving bijections)
on a poset $Q$, we write $Q^{E}$ for the fixed point set 
\[
Q^{E}:=\{q\in Q:q^{g}=q\mbox{ for all }g\in E\}.
\]
The action of $E$ on $Q$ induces a simplicial action of $E$ on
$\Delta Q$. 

Work of Smith in \cite{Smith:1941} and of Oliver in \cite{Oliver:1975}
shows that, given a prime $p$, certain groups cannot act without
fixed points on $\ff_{p}$-acyclic complexes. (A clear summary of
this work appears in \cite[Section 1]{Oliver:1975}.) Applying their
results to actions on order complexes, we obtain immediately the next
result.
\begin{thm}[Smith \cite{Smith:1941}, Oliver \cite{Oliver:1975}]
\label{thm:smith} Let $p$ and $r$ be primes. Let $Q$ be a poset
such that $\Delta Q$ is $\ff_{p}$-acyclic. Let $E$ be a group admitting
a normal series $P\normalineq H\normalineq E$ such that
\begin{enumerate}
\item $P$ is a $p$-group,
\item $H/P$ is cyclic, and 
\item $E/H$ is an $r$-group. 
\end{enumerate}
If $E$ acts on $Q$ by automorphisms, then $Q^{E}\neq\emptyset$. \end{thm}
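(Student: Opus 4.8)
The plan is to transfer the statement from the poset $Q$ to its order complex $\Delta Q$, apply the topological fixed-point theorem of Smith and Oliver there, and transfer the conclusion back. The only genuinely new step is the dictionary between the two settings, so I would begin by pinning that down.

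The key observation is that, because $E$ acts on $Q$ by order-automorphisms, the fixed-point set of $E$ on $\Delta Q$ is exactly $\Delta(Q^{E})$. To see this, suppose $g\in E$ carries a chain (that is, a face of $\Delta Q$) to itself. Then $g$ permutes the elements of that chain while preserving the partial order of $Q$; since a finite chain is totally ordered and its only order-preserving self-bijection is the identity, $g$ must fix the chain pointwise. Thus no face of $\Delta Q$ is permuted nontrivially by any element of $E$, so the fixed-point set is a genuine subcomplex of $\Delta Q$ (no passage to the barycentric subdivision is needed), and a face lies in it precisely when all of its vertices lie in $Q^{E}$. Hence $(\Delta Q)^{E}=\Delta(Q^{E})$, and in particular $(\Delta Q)^{E}$ contains a point if and only if $Q^{E}\neq\emptyset$.

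With the dictionary established, I would invoke the work of Smith \cite{Smith:1941} and Oliver \cite{Oliver:1975} in its topological form: a finite group possessing a normal series of the stated type --- a $p$-group, then a cyclic quotient, then an $r$-group quotient --- has a fixed point whenever it acts on a finite $\ff_{p}$-acyclic complex. The complex $\Delta Q$ is finite and $\ff_{p}$-acyclic by hypothesis, and $E$ acts on it simplicially through its action on $Q$, so this theorem yields that $(\Delta Q)^{E}$ contains a point. By the previous paragraph this gives $Q^{E}\neq\emptyset$, as desired.

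The substantive mathematics sits entirely inside the cited theorem, and it is worth isolating where the difficulty lies. Smith Theory alone disposes of the normal $p$-subgroup $P$: it shows that $(\Delta Q)^{P}$ is again $\ff_{p}$-acyclic, hence nonempty. The trouble is the top of the series, where an $r$-group acts on a complex that is acyclic only over $\ff_{p}$; when $r\neq p$ no further application of Smith Theory is available, since mod-$r$ acyclicity is simply not present. Overcoming this prime mismatch is precisely Oliver's contribution, achieved not by proving the fixed-point set acyclic but by establishing a congruence on its Euler characteristic that forbids it from being empty. Because reproducing that argument is unnecessary here, I would treat the topological fixed-point property as the external input and let the order-complex dictionary above carry all of the work specific to this lemma.
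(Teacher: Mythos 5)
Your proposal is correct and takes essentially the same approach as the paper: the paper also treats the Smith--Oliver fixed-point theorem as an external input and states that the poset version follows immediately by applying it to the simplicial action on $\Delta Q$. Your explicit dictionary $(\Delta Q)^{E}=\Delta(Q^{E})$ (via the observation that an order-preserving self-bijection of a finite chain is the identity, so invariant faces are fixed pointwise) merely spells out the transfer step the paper leaves implicit.
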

\begin{rem}
It is not necessary that the primes $p,r$ in Theorem~\ref{thm:smith}
be distinct. 
\end{rem}

\begin{rem}
\label{rem:SmithAppl}If $E\cong P\times K$, where $P$ is a $p$-group
and $K$ is either cyclic or an $r$-group, then $E$ satisfies conditions
(1)-(3) of Theorem~\ref{thm:smith}. The same holds for $E\cong(P\times K)\semidirect R$,
where $P$ is a $p$-group, $R$ is an $r$-group, and $K$ is cyclic.
\end{rem}
We will apply Theorem~\ref{thm:smith} to $\cosetlat(G,N)$. There
are two actions on $\cosetlat(G)$ that we wish to consider. The first
action is that of $G\times G$ by left and right translation. That
is, 
\begin{equation}
Hx\cdot(g,h)=g^{-1}Hxh=H^{g}g^{-1}xh\,\,\mbox{for }(g,h)\in G\times G.\label{eq:action1}
\end{equation}
 The second action is by $\Aut(G)$, where 
\begin{equation}
\left(Hx\right)^{\alpha}=H^{\alpha}x^{\alpha}\,\,\mbox{for }\alpha\in\Aut(G).\label{eq:action2}
\end{equation}
The component-wise action of $\Aut(G)$ on $G\times G$ gives rise
to the semidirect product $A:=(G\times G)\semidirect\Aut(G)$. The
actions described in (\ref{eq:action1}) and (\ref{eq:action2}) combine
to form a well-defined action of $A$ on $\cosetlat(G)$, with $(g,h,\alpha)$
mapping $Hx$ to
\[
\left(Hx\cdot(g,h)\right)^{\alpha}=(g^{-1}Hxh)^{\alpha}=(Hx)^{\alpha}\cdot(g^{\alpha},h^{\alpha}).
\]

\begin{rem}
If $\left|G\right|>1$, then the action of $A$ has a nontrivial kernel
$N$. The quotient $A/N$ is called the \emph{holomorph} of $G$.
The kernel of this action will be of no concern to us.
\end{rem}
In all but one of our arguments, we will use subgroups of $(G\times G)\semidirect\Aut(G)$
that are contained in $G\times G$. These subgroups will be of the
form $P\times K$ with $P,K\leq G$. When we mention an action of
such a subgroup on $\cosetlat(G)$, we always mean that $P$ acts
by left translation and $K$ acts by right translation. 

Suppose $N\normalineq G$. If $HN=G$ then $H^{g}N=G$ for all $g\in G$.
It follows that $P\times K$ acts on $\cosetlat(G,N)$. Note that
$P\times1$ fixes $Hx$ if and only if $P\leq H$ and $1\times K$
fixes $Hx$ if and only if $K^{x^{-1}}\leq H$. The next result follows.
\begin{lem}
\label{lem:LRactionFixed} A subgroup $P\times K$ of $G\times G$
fixes $Hx\in\cosetlat(G)$ if and only if $\langle P,K^{x^{-1}}\rangle\leq H$. 
\end{lem}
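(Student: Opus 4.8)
The plan is to reduce to the two special cases already recorded in the discussion preceding the statement, namely that $P\times 1$ fixes $Hx$ precisely when $P\leq H$, and that $1\times K$ fixes $Hx$ precisely when $K^{x^{-1}}\leq H$. The key structural fact I would invoke is that the stabilizer of $Hx$ under the $G\times G$ action is a subgroup of $G\times G$. Consequently a subgroup $P\times K$ lies in this stabilizer if and only if it is generated by elements of the stabilizer; since $P\times K=\langle P\times 1,\,1\times K\rangle$, this happens exactly when both $P\times 1$ and $1\times K$ fix $Hx$.

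First I would verify the two special cases directly from the definition of the action in (\ref{eq:action1}). For $(p,1)$ with $p\in P$, one has $Hx\cdot(p,1)=p^{-1}Hx$, so this coset equals $Hx$ if and only if $p^{-1}H=H$, i.e.\ $p\in H$; ranging over all $p\in P$ gives $P\leq H$. For $(1,k)$ with $k\in K$, one has $Hx\cdot(1,k)=Hxk$, and $Hxk=Hx$ if and only if $xkx^{-1}\in H$; using the conjugation convention $K^{x^{-1}}=xKx^{-1}$, this says precisely $k^{x^{-1}}\in H$, and ranging over all $k\in K$ gives $K^{x^{-1}}\leq H$.

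Combining these, $P\times K$ fixes $Hx$ if and only if $P\leq H$ and $K^{x^{-1}}\leq H$. Finally, since $H$ is a subgroup, it contains both $P$ and $K^{x^{-1}}$ if and only if it contains the subgroup they generate, that is, if and only if $\langle P,K^{x^{-1}}\rangle\leq H$. This yields the claimed equivalence.

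The argument is essentially a matter of bookkeeping rather than of overcoming a genuine obstacle; the one point requiring care is the conjugation convention, where it is easy to conflate $K^{x}$ and $K^{x^{-1}}$. I would therefore recall the convention $K^{g}=g^{-1}Kg$ explicitly at the relevant step, so that the placement of the inverse in $K^{x^{-1}}$ is transparent and matches the computation $xkx^{-1}=k^{x^{-1}}$.
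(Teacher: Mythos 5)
Your proof is correct and follows the same route as the paper, which derives the lemma from exactly the two special cases you verify ($P\times 1$ fixes $Hx$ iff $P\leq H$, and $1\times K$ fixes $Hx$ iff $K^{x^{-1}}\leq H$) combined via the subgroup property of $H$; you have merely filled in the computations the paper leaves implicit. Your handling of the convention $K^{g}=g^{-1}Kg$, giving $xkx^{-1}=k^{x^{-1}}$, matches the paper's usage.
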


\subsection{Minimal normal subgroups}

Along with Smith theory, we use the Classification of Finite Simple
Groups to prove Theorem~\ref{thm:MainIndStep}. Suppose $G$ is nontrivial,
and $N$ is a minimal normal subgroup of $G$. There exist some positive
integer $t$ and some simple group $L$ such that $N$ is isomorphic
with the direct product of $t$ copies of $L$. (See for example \cite[Theorem 4.3A(iii)]{Dixon/Mortimer:1996}.)
In this situation, we abuse notation by writing $N=L^{t}$ and representing
an element of $N$ as a $t$-tuple of elements of $L$.

The case where the simple group $L$ is cyclic of prime order was
already handled by Brown.
\begin{lem}
\label{lem:amns}If $G$ has an abelian minimal normal subgroup $N$,
then $\Delta\cosetlat(G,N)$ is not $\ff_{2}$-acyclic.\end{lem}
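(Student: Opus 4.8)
The plan is to produce a cyclic subgroup of $G$ acting fixed-point-freely on $\cosetlat(G,N)$ by left translation and then invoke Smith theory (Theorem~\ref{thm:smith}) at the prime $p=2$. Since an abelian minimal normal subgroup is elementary abelian, I may write $N\cong(\zz/q\zz)^{t}$ for some prime $q$ and some $t\geq 1$. The whole argument then turns on one structural observation about supplements to $N$.

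First I would show that every $Hx\in\cosetlat(G,N)$ satisfies $H\cap N=1$. Because $N$ is abelian, $N$ normalizes its subgroup $H\cap N$; and $H$ normalizes $H\cap N$ as well, being the intersection of $H$ with the normal subgroup $N$. Since $HN=G$, this yields $H\cap N\normalineq G$. By minimality of $N$ we get $H\cap N\in\{1,N\}$, and $H\cap N=N$ would force $N\leq H$ and hence $G=HN=H$, contradicting that $H$ is a proper subgroup. Thus $H\cap N=1$ for every $Hx\in\cosetlat(G,N)$.

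Next I would pick a cyclic subgroup $C\leq N$ of order $q$ and let $C\times 1$ act by left translation. By Lemma~\ref{lem:LRactionFixed}, $C\times 1$ fixes $Hx$ if and only if $C\leq H$. But $C\leq N$, so $C\leq H$ would give $1\neq C\leq H\cap N=1$, which is absurd; hence $\cosetlat(G,N)^{C}=\emptyset$. Finally, as $C$ is cyclic it satisfies conditions (1)--(3) of Theorem~\ref{thm:smith} for $p=2$ and any auxiliary prime (see Remark~\ref{rem:SmithAppl}: when $q=2$ take the normal $2$-subgroup to be $C$ itself, and when $q$ is odd take it to be trivial with $H=C$). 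Therefore, were $\Delta\cosetlat(G,N)$ to be $\ff_{2}$-acyclic, Theorem~\ref{thm:smith} would force $\cosetlat(G,N)^{C}\neq\emptyset$, contradicting the previous step; so $\Delta\cosetlat(G,N)$ is not $\ff_{2}$-acyclic. The degenerate possibility $\cosetlat(G,N)=\emptyset$ is consistent with this reasoning, since $\Delta\emptyset=\{\emptyset\}$ is itself not acyclic over any ring, as recorded in Section~\ref{sec:PreSimplicial}.

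The step I expect to be the crux is the reduction to a \emph{cyclic} acting group. The naive choice is to let all of $N$ act by left translation: it is fixed-point-free by the same computation, but when $q$ is odd and $t\geq 2$ the group $N$ is neither a $2$-group nor cyclic, so it fails the hypotheses of Theorem~\ref{thm:smith} for $p=2$. The observation $H\cap N=1$ is exactly what rescues the argument, since it lets me shrink $N$ down to a single cyclic subgroup $C$ while preserving the absence of fixed points, thereby making the group Smith-admissible at the prime $2$ regardless of the parity of $q$ or the size of $t$.
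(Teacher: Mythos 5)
Your proof is correct, but it takes a genuinely different route from the paper. The paper's proof is a two-line quotation of Brown: by \cite[Proposition 9]{Brown:2000}, when $N$ is an abelian minimal normal subgroup the poset $\cosetlat(G,N)$ is an \emph{antichain} whose size is divisible by $|N|$, so $\Delta\cosetlat(G,N)$ is either $\{\emptyset\}$ (nontrivial $\homol_{-1}$) or a discrete set of at least two points (nontrivial $\homol_{0}$); no group action is needed. Your argument instead proves the key structural fact from scratch --- that $H\cap N\normalineq G$ and hence $H\cap N=1$ for every $Hx\in\cosetlat(G,N)$, which is essentially the computation underlying Brown's antichain observation --- and then converts it into a fixed-point-free action of a cyclic group $C\leq N$ of prime order, closing with Theorem~\ref{thm:smith}; your verification of the normal series (taking $P=1$ or $P=C$ according to the parity of $q$), of Lemma~\ref{lem:LRactionFixed}, and of the degenerate case $\cosetlat(G,N)=\emptyset$ are all sound, and as you note, Cauchy's theorem would even spare you the reduction to elementary abelian $N$. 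What the two approaches buy: yours has the aesthetic advantage of treating the abelian case by exactly the same Smith-theoretic template the paper uses for nonabelian minimal normal subgroups, giving a uniform proof of Theorem~\ref{thm:MainIndStep} (and in fact ruling out $\ff_{p}$-acyclicity for every prime $p$, since $C$ is Smith-admissible at any $p$); the paper's is strictly more elementary and yields a stronger conclusion, namely explicit nonvanishing reduced homology in degree $-1$ or $0$ over \emph{any} coefficient ring, whereas Smith theory only excludes acyclicity over prime fields.
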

\begin{proof}
As noted in \cite[Proposition 9]{Brown:2000}, the poset $\cosetlat(G,N)$
is an antichain of size divisible by $|N|$. Therefore, $\Delta\cosetlat(G,N)$
is not connected if it contains a nonempty face. It follows that one
of $\homol_{-1}(\Delta\cosetlat(G,N);\ff_{2})$ or $\homol_{0}(\Delta\cosetlat(G,N);\ff_{2})$
is nontrivial.
\end{proof}
We turn now to the case where $N=L^{t}$ with $L$ nonabelian simple.
A subgroup $K\leq L$ can be embedded in $N$ diagonally, as follows.
\begin{defn}
Given $N=L^{t}$ and $K\leq L$, we define 
\[
K^{\diag}:=\{(k,\ldots,k):k\in K\}\leq N.
\]

\end{defn}
The next definition is key for finding useful group actions on $\Delta\cosetlat(G,N)$.
\begin{defn}
\label{def:Univpgen}Let $G$ be a group, let $H\leq G$ and let $p$
be a prime. We say that $H$ \textit{universally} $p$-\textit{generates}
$G$ if $\langle H,P\rangle=G$ whenever $P$ is a Sylow $p$-subgroup
of $G$. \end{defn}
\begin{rem}
\label{rem:UnivSylpr} Let $p,r$ be primes. A Sylow $r$-subgroup
of $G$ universally $p$-generates $G$ if and only if every maximal
subgroup of $G$ has index divisible by at least one of $p$ and $r$. 
\end{rem}
The importance of Definition~\ref{def:Univpgen} is apparent from
the following lemma.
\begin{lem}
\label{lem:UnivpgenFixedpts}Let $G$ be a group and let $p$ be a
prime. Let $N\unlhd G$. If $K\leq N$ universally $p$-generates
$N$ and $P$ is any Sylow $p$-subgroup of $N$, then $\cosetlat(G,N)^{P\times K}=\emptyset$. \end{lem}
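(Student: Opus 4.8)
The plan is to combine Lemma~\ref{lem:LRactionFixed} with the universal $p$-generation hypothesis to produce a contradiction from any fixed point. Suppose for contradiction that $\cosetlat(G,N)^{P\times K}\neq\emptyset$, and pick a coset $Hx$ in this fixed-point set. By definition $Hx\in\cosetlat(G,N)$ means $HN=G$ and $H$ is proper in $G$. By Lemma~\ref{lem:LRactionFixed}, the fact that $P\times K$ fixes $Hx$ gives $\langle P,K^{x^{-1}}\rangle\leq H$.

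The key maneuver is to intersect everything with $N$ and exploit that $N$ is normal. First I would observe that since $P$ is a Sylow $p$-subgroup of $N$, we have $P\leq N$, and similarly $K\leq N$, so $K^{x^{-1}}\leq N^{x^{-1}}=N$ because $N\normalineq G$. Hence the subgroup $\langle P,K^{x^{-1}}\rangle$ is contained in $N$, and therefore it is contained in $H\cap N$. Now I want to apply universal $p$-generation inside $N$. The hypothesis says $K$ universally $p$-generates $N$, meaning $\langle K,P'\rangle=N$ for \emph{every} Sylow $p$-subgroup $P'$ of $N$. The subtlety is that the hypothesis is phrased in terms of $K$ together with a Sylow $p$-subgroup, whereas what we have is $K^{x^{-1}}$ together with $P$. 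The clean way to reconcile these is to conjugate: universal $p$-generation is a conjugation-invariant property, so $K^{x^{-1}}$ also universally $p$-generates $N$ (since $N^{x^{-1}}=N$, conjugating the identity $\langle K,P'\rangle=N$ by $x^{-1}$ yields $\langle K^{x^{-1}},P'^{x^{-1}}\rangle=N$, and $P'^{x^{-1}}$ ranges over all Sylow $p$-subgroups of $N$ as $P'$ does). Applying this with the particular Sylow $p$-subgroup $P$ gives $\langle K^{x^{-1}},P\rangle=N$.

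Combining the two displayed facts, $N=\langle P,K^{x^{-1}}\rangle\leq H\cap N\leq H$, so $N\leq H$. But then $HN=H$, which forces $H=HN=G$, contradicting the requirement that $H$ be a proper subgroup of $G$ (cosets in $\cosetlat(G)$ are cosets of \emph{proper} subgroups). This contradiction shows no fixed coset can exist, so $\cosetlat(G,N)^{P\times K}=\emptyset$, as required.

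The only real point requiring care is the conjugation step reconciling $K$ versus $K^{x^{-1}}$; this is where one must use the normality of $N$ to ensure that conjugation by $x^{-1}$ permutes the Sylow $p$-subgroups of $N$ and preserves the universal $p$-generation property. Everything else is a direct unwinding of the definitions of $\cosetlat(G,N)$, of universal $p$-generation, and of Lemma~\ref{lem:LRactionFixed}. I expect the proof to be quite short.
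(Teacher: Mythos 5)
Your proof is correct and takes essentially the same route as the paper's: assume a fixed coset $Hx$, apply Lemma~\ref{lem:LRactionFixed} to get $\langle P,K^{x^{-1}}\rangle\leq H$, observe that $K^{x^{-1}}$ still universally $p$-generates $N$, and derive the contradiction $N\leq H$ against $H<G$ and $HN=G$. The paper simply compresses your careful conjugation-invariance argument into the one-line assertion that universal $p$-generation passes from $K$ to $K^{x^{-1}}$.
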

\begin{proof}
Assume for contradiction that $Hx\in\cosetlat(G,N)^{P\times K}$.
By Lemma~\ref{lem:LRactionFixed}, $H$ contains both $P$ and $K^{x^{-1}}$.
As $K$ universally $p$-generates $N$, so does $K^{x^{-1}}$. Therefore,
$H$ contains $N$. This is impossible, as $H<G$ and $HN=G$.\end{proof}
\begin{lem}
\label{lem:UnivProd} Let $L$ be a simple group, let $p$ be a prime,
and let $t$ be a positive integer. If a proper subgroup $K<L$ universally
$p$-generates $L$, then $K^{\diag}$ universally $p$-generates
$N:=L^{t}$. \end{lem}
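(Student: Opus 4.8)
The plan is to fix a Sylow $p$-subgroup $Q$ of $N=L^{t}$ and show that $M:=\langle K^{\diag},Q\rangle$ is all of $N$. First I would record the structure of $Q$: since $N$ is the direct product of $t$ copies of $L$, we may write $Q=P_{1}\times\cdots\times P_{t}$, where $P_{i}$ is a Sylow $p$-subgroup of the $i$-th factor $L_{i}\cong L$. I would also observe at the outset that the hypothesis forces $p$ to divide $\left|L\right|$: if a Sylow $p$-subgroup of $L$ were trivial, then universal $p$-generation would read $\langle K,1\rangle=L$, contradicting $K<L$. Hence every $P_{i}$ is nontrivial, and I may choose a nonidentity element in each.

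The key observation is that every coordinate projection $\pi_{i}\colon N\to L_{i}\cong L$ maps $M$ onto $L$. Indeed, since projections are homomorphisms, $\pi_{i}(M)=\langle\pi_{i}(K^{\diag}),\pi_{i}(Q)\rangle=\langle K,P_{i}\rangle$, and this equals $L$ precisely because $K$ universally $p$-generates $L$ and $P_{i}$ is a Sylow $p$-subgroup of $L$. Thus $M$ is a \emph{subdirect product} of $L^{t}$.

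Next I would show that $M$ contains each factor $L_{i}$. Since $L_{i}\normalin N$ and $M\leq N$, the intersection $M\cap L_{i}$ is normal in $M$; applying $\pi_{i}$ (under which normality is preserved) shows that $\pi_{i}(M\cap L_{i})$ is a normal subgroup of $\pi_{i}(M)=L$. It is nontrivial, because $Q\leq M$ contains the element whose $i$-th coordinate is a chosen nonidentity element of $P_{i}$ and whose remaining coordinates are trivial. Simplicity of $L$ then forces $\pi_{i}(M\cap L_{i})=L$; since $\pi_{i}$ restricts to an isomorphism on $L_{i}$, this yields $M\cap L_{i}=L_{i}$, i.e. $L_{i}\leq M$. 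As this holds for every $i$, we obtain $M\supseteq\langle L_{1},\ldots,L_{t}\rangle=N$, so $M=N$ and $K^{\diag}$ universally $p$-generates $N$.

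I do not anticipate a serious obstacle here: in contrast to the deeper arguments of the paper, this step is elementary and uses neither the Classification nor Smith theory. The one point requiring care is the passage from ``every projection of $M$ is onto'' to ``$M$ is everything'', since a subdirect product of $L^{t}$ need not be the full group (the diagonal $L^{\diag}$ is a counterexample). The elements of $Q$ supported on a single coordinate are exactly what break any diagonal identification between distinct factors, and encoding this through the normality of $M\cap L_{i}$ in $M$ is the crux of the argument.
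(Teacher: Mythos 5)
Your proof is correct and matches the paper's own argument in all essentials: both decompose a Sylow $p$-subgroup of $N$ as $\prod_{i}P_{i}$ with each $P_{i}$ a Sylow $p$-subgroup of $L_{i}$, observe that $P_{i}\neq1$ (since $K<L$ together with universal $p$-generation forces $p$ to divide $\left|L\right|$), show that each coordinate projection of $M:=\langle K^{\diag},Q\rangle$ is all of $L_{i}$, and then use simplicity of $L$ to absorb each factor into $M$. The only difference is in how the last step is packaged: you close with the standard subdirect-product argument (the nontrivial subgroup $M\cap L_{i}$, normal in $M$ because $L_{i}\normalin N$, projects to a nontrivial normal subgroup of $\pi_{i}(M)=L_{i}$), whereas the paper conjugates $P_{i}$ by elements of $M$ with arbitrary $i$-th coordinate and invokes the conjugacy part of Sylow's Theorem to conclude that $M$ contains every Sylow $p$-subgroup of $L_{i}$, which together generate $L_{i}$ --- the same underlying mechanism, phrased through Sylow conjugacy instead of normality.
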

\begin{proof}
Let $P$ be a Sylow $p$-subgroup of $N$. By assumption, $N=\prod_{i=1}^{t}L_{i}$
with each $L_{i}\cong L$. It is not hard to see that $P=\prod_{i=1}^{t}(P\cap L_{i})$.
Moreover, $P\cap L_{i}$ is a Sylow $p$-subgroup of $L_{i}$ for
each $i\in[t]$. The standard projection of $\langle P,K^{\diag}\rangle$
onto $L_{i}$ thus contains both $P\cap L_{i}$ and $K$ and is therefore
all of $L_{i}$. It follows now from (the conjugacy part of) Sylow's
Theorem that $\langle P,K^{\diag}\rangle$ contains every Sylow $p$-subgroup
of $L_{i}$. 

As $K<L$ universally $p$-generates $L$, $P\cap L_{i}$ is nontrivial.
As $L$ is simple, it follows that the Sylow $p$-subgroups of $L_{i}$
together generate $L_{i}$. Hence, $L_{i}\leq\langle P,K^{\diag}\rangle$
for each $i\in[t]$. \end{proof}
\begin{cor}
\label{cor:EmptyfixProd} Let $L$ be a simple group, let $p$ be
a prime, and let $t$ be a positive integer. Let $G$ be a group with
normal subgroup $N=L^{t}$, and let $P$ be a Sylow $p$-subgroup
of $N$. If a proper subgroup $K<L$ universally $p$-generates $L$,
then $\cosetlat(G,N)^{P\times K^{\diag}}=\emptyset$. \end{cor}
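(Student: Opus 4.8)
The plan is to combine the two immediately preceding lemmas, since this corollary is essentially their composition. First I would invoke Lemma~\ref{lem:UnivProd}. Its hypotheses are exactly what we are handed here: $L$ is simple, $p$ is a prime, $t$ is a positive integer, and the proper subgroup $K<L$ universally $p$-generates $L$. The conclusion of that lemma is that the diagonal subgroup $K^{\diag}$ universally $p$-generates $N=L^{t}$. Thus the first step converts our hypothesis about generation in a single simple factor $L$ into the corresponding statement about generation inside the full direct product $N$.

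Next I would apply Lemma~\ref{lem:UnivpgenFixedpts}, taking the subgroup that plays the role of $K$ there to be $K^{\diag}\leq N$. The hypotheses of that lemma require a prime $p$, a normal subgroup $N\unlhd G$, a subgroup of $N$ that universally $p$-generates $N$, and a Sylow $p$-subgroup $P$ of $N$. The first step furnishes $K^{\diag}$ as a subgroup of $N$ universally $p$-generating $N$, and $P$ is given as a Sylow $p$-subgroup of $N$ in the statement, so the lemma applies verbatim and yields $\cosetlat(G,N)^{P\times K^{\diag}}=\emptyset$, which is precisely the desired conclusion.

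I do not expect any genuine obstacle: the mathematical content is entirely contained in the two cited lemmas, and the work here is only to verify that the hypotheses line up. The single point meriting a moment's attention is that $K^{\diag}$ really is a subgroup of $N$, so that Lemma~\ref{lem:UnivpgenFixedpts} is applicable with $K^{\diag}$ in place of its ``$K$''; this is immediate from the definition $K^{\diag}=\{(k,\ldots,k):k\in K\}\leq N$. Accordingly, the proof will be a two-line chaining of Lemma~\ref{lem:UnivProd} into Lemma~\ref{lem:UnivpgenFixedpts}.
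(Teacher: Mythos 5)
Your proof is correct and is exactly the paper's argument: the paper proves this corollary by the same two-step chaining of Lemma~\ref{lem:UnivProd} into Lemma~\ref{lem:UnivpgenFixedpts}, which you have simply spelled out with the hypothesis-checking made explicit.
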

\begin{proof}
This follows immediately from Lemmas \ref{lem:UnivpgenFixedpts} and
\ref{lem:UnivProd}.\end{proof}
\begin{cor}
\label{cor:UnivCyclicorPp}Let $N,G,L,p$ and $K$ be as in Corollary~\ref{cor:EmptyfixProd}.
If $K$ is either cyclic or of prime-power order, then $\Delta\cosetlat(G,N)$
is not $\ff_{p}$-acyclic. \end{cor}
\begin{proof}
This follows directly from Corollary~\ref{cor:EmptyfixProd} and
Theorem~\ref{thm:smith}. 
\end{proof}
Our strategy is now clear. We go through the list of nonabelian simple
groups, as provided by the Classification. For each such group $L$,
we look for some $K<L$ that universally $2$-generates $L$ and is
either cyclic or of prime-power order. This strategy fails only when
$L=A_{7}$, in which case we use a slight extension of Corollary~\ref{cor:UnivCyclicorPp}.

Every nonabelian finite simple group is, up to isomorphism, an alternating
group $A_{n}$ with $n\geq5$, a group of Lie type, or one of twenty
six sporadic groups. Note that the small alternating groups $A_{5}\cong PSL_{2}(5)$,
$A_{6}\cong PSL_{2}(9)$ and $A_{8}\cong PSL_{4}(2)$ are all isomorphic
with simple groups of Lie type. (See for example \cite[Theorem 2.2.10]{Gorenstein/Lyons/Solomon:1998}.)
\begin{lem}
\label{lem:UniversalGeneration} If $L$ is simple and $L\not\cong A_{7}$,
then there is some $K<L$ that is either cyclic or of prime-power
order, and that universally $2$-generates $L$.

Indeed, the following claims hold. 
\begin{enumerate}
\item If $L=A_{n}$ with $n\geq9$ odd and $h\in L$ is an $n$-cycle, then
$\langle h\rangle$ universally $2$-generates $L$. 
\item If $L=A_{n}$ with $n\geq10$ even and $h\in L$ is an $(n-1)$-cycle,
then $\langle h\rangle$ universally $2$-generates $L$. 
\item If $L$ is a sporadic simple group or a simple group of Lie type,
then there is some odd prime $p$ such that a Sylow $p$-subgroup
of $L$ universally $2$-generates $L$. 
\end{enumerate}
\end{lem}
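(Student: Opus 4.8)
The plan is to translate universal $2$-generation into a statement about maximal subgroups of odd index, and then to dispatch alternating groups and the remaining Lie type and sporadic groups by separate means. The basic observation is that, for any $K\le L$, the subgroup $K$ fails to universally $2$-generate $L$ precisely when some maximal subgroup $M<L$ contains both $K$ and a full Sylow $2$-subgroup of $L$; since a subgroup contains a full Sylow $2$-subgroup exactly when its index is odd, $K$ universally $2$-generates $L$ if and only if no maximal subgroup of odd index contains $K$. For part~(3), where $K$ is a Sylow $p$-subgroup, this is exactly the criterion of Remark~\ref{rem:UnivSylpr}: I must exhibit an odd prime $p$ so that every maximal subgroup of $L$ has index divisible by $2$ or by $p$, equivalently so that $p$ divides the index of every odd-index maximal subgroup. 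Throughout I would lean on the Liebeck--Saxl classification of the maximal subgroups of odd index of the almost simple groups.

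For parts~(1) and (2) I would fix a Sylow $2$-subgroup $P$ of $A_n$ and study $G:=\langle h,P\rangle$, aiming to prove $G=A_n$ for every $P$. First $G$ is transitive: when $n$ is odd the $n$-cycle $h$ is already transitive, and when $n$ is even $h$ fixes a single point while $P$ has none (a Sylow $2$-subgroup of $A_n$ lies in a point stabiliser $A_{n-1}$ only if $[A_n:A_{n-1}]=n$ is odd), so the two orbits fuse. In the even case this upgrades for free to $2$-transitivity, since the stabiliser of the fixed point of $h$ contains $\langle h\rangle$, which is transitive on the remaining $n-1$ points. In the odd case I would instead prove primitivity directly: a nontrivial $h$-invariant block system has $b$ blocks of size $a$ with $ab=n$ and $a,b\ge 3$ odd, so $G$ would lie in $(S_a\wr S_b)\cap A_n$; but the $2$-adic computation
\[
v_2\!\left(\frac{n!}{(a!)^{b}\,b!}\right)=b\bigl(s_2(a)-1\bigr)+s_2(b)-s_2(n)\ge\bigl(s_2(a)-1\bigr)\bigl(b-s_2(b)\bigr)>0,
\]
where $s_2$ is the binary digit sum and I use $s_2(ab)\le s_2(a)s_2(b)$, shows this wreath product has even index and so cannot contain $P$.

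Having reduced to the primitive case (in fact $2$-transitive when $n$ is even), I would invoke the classification of primitive groups containing an $n$-cycle when $n$ is odd, and the classification of $2$-transitive groups when $n$ is even. In either case, apart from $A_n$ and $S_n$ the candidates are of projective type ($PSL_d(q)\le G\le P\Gamma L_d(q)$), of affine type, or one of finitely many small examples (degrees $11$ and $23$ in the $n$-cycle case). Since $G$ contains a full Sylow $2$-subgroup of $A_n$, it suffices to check that in the relevant ranges ($n\ge 9$ odd, $n\ge 10$ even) every non-alternating candidate has $2$-part strictly smaller than $|A_n|_2=2^{v_2(n!)-1}$; because $v_2(n!)$ grows linearly in $n$ while the $2$-parts of the projective and affine families grow only polylogarithmically, the comparison succeeds for all large $n$ and the finitely many small degrees are checked individually. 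As $G\le A_n$ rules out $S_n$, we conclude $G=A_n$.

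Finally, for part~(3) I would run through the Liebeck--Saxl list family by family. In defining characteristic $2$ the odd-index maximal subgroups are the parabolics, whose indices are Gaussian-binomial type expressions; here a primitive prime divisor of $q^{n}-1$ (Zsigmondy) divides every such index, since the factor $q^{n}-1$ occurs in each numerator while the denominators involve only $q^{i}-1$ with $i<n$, and such a divisor is odd because $q^n-1$ is odd. In odd characteristic the odd-index maximal subgroups are torus normalisers and subfield-type subgroups, and the defining characteristic prime typically divides all of their indices; the sporadic groups are handled directly from the ATLAS. The main obstacle is precisely this exhaustive bookkeeping---assembling, for every simple $L$ of Lie type or sporadic, the complete list of odd-index maximal subgroups and exhibiting a single odd prime dividing all their indices, while disposing of the Zsigmondy exceptions (such as $PSL_6(2)$) and the smallest groups by ad hoc arguments. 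I note that $A_7$ is genuinely excluded, since no cyclic or prime-power subgroup universally $2$-generates it, so it is reserved for separate treatment, whereas $A_5\cong PSL_2(5)$, $A_6\cong PSL_2(9)$ and $A_8\cong PSL_4(2)$ are absorbed into part~(3).
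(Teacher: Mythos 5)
Your proposal is correct in outline, but it takes a genuinely different and much heavier route than the paper on the alternating claims, while on claim~(3) it essentially proposes to re-derive a result the paper simply cites. For claims~(1) and (2) the paper avoids all CFSG-dependent classifications: it splits proper overgroups of $\langle h,P\rangle$ into intransitive, imprimitive and primitive subgroups, kills the primitive case with Jordan's 1875 theorem (Theorem~\ref{thm:jordan}: for $n\geq 9$, a primitive group containing an element with exactly $n-4$ fixed points --- e.g.\ a double transposition in a Sylow $2$-subgroup --- contains $A_n$), and kills the imprimitive case by showing ${2d-1 \choose d-1}$ is even for odd $d$ (Lemma~\ref{lem:impodd}); your Legendre/digit-sum computation of $v_2\bigl(n!/((a!)^b\,b!)\bigr)$ is a correct equivalent of that parity lemma, and your transitivity argument in the even case (a point stabilizer has even index $n$, hence contains no Sylow $2$-subgroup) is if anything slicker than the paper's explicit fixed-point-free element. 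Where you diverge is in replacing Jordan by the classification of primitive groups with a regular cyclic subgroup (odd $n$) and of $2$-transitive groups (even $n$), followed by a comparison of $2$-parts; this works, since $|A_n|_2=2^{n-s_2(n)-1}$ grows linearly while the projective, affine and remaining candidates have polylogarithmic $2$-part, but it imports CFSG where the paper needs only a nineteenth-century theorem, and your ``finitely many small degrees'' still have to be enumerated and checked. For claim~(3), the family-by-family Liebeck--Saxl bookkeeping you outline is exactly the content of the Damian--Lucchini theorem that the paper quotes (Theorem~\ref{thm:DamianLucchini}); the paper then finishes only the leftover cases --- the characteristic-$2$ types $A_5(2)$, $C_3(2)$, $D_4(2)$, ${}^2A_3(2)$ (your Zsigmondy exceptions) and $McL$ by checking $3$-divisibility of parabolic, resp.\ odd-index maximal, subgroup indices, and the odd-characteristic types $B_n$, $D_n$, $G_2$ via Tits's lemma (Lemma~\ref{lem:tits}) combined with Liebeck--Saxl (Lemma~\ref{lem:oddpar}). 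One caution on your odd-characteristic summary: the odd-index maximal subgroups there are not only torus normalizers and subfield subgroups --- in types $A_n(q)$ and $E_6(q)$ they include parabolics, whose index is coprime to the defining characteristic, so the defining prime cannot ``typically'' serve for those families and a different prime is required (this is precisely what Damian--Lucchini's case~(a) supplies, and why the paper's Lemma~\ref{lem:oddpar} excludes exactly these types). Finally, your remark on $A_7$ is correct and in fact understated: all maximal subgroups of $A_7$ have odd index ($7$, $15$, $21$, $35$), so no proper subgroup whatsoever universally $2$-generates $A_7$.
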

We will prove Claims~(1) and (2) in Section~\ref{sec:Alternating},
and Claim~(3) in Section~\ref{sec:LieSporadic}.

We examine $A_{7}$ in Section~\ref{sec:A7}, where we prove the
following result.
\begin{lem}
\label{lem:a7} If $G$ has a minimal normal subgroup $N\cong A_{7}^{t}$,
then $\Delta\cosetlat(G,N)$ is not $\ff_{2}$-acyclic. 
\end{lem}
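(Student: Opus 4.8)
The plan is to prove Lemma~\ref{lem:a7} by adapting the strategy that works for all other simple groups, but replacing the single cyclic-or-prime-power subgroup $K$ with a pair of subgroups that together generate enough of $A_7$. Recall that the clean argument (Corollary~\ref{cor:UnivCyclicorPp}) requires a \emph{single} proper subgroup $K < A_7$ that universally $2$-generates $A_7$ and is itself cyclic or of prime-power order. The obstruction for $A_7$ is precisely that no such $K$ exists: the maximal subgroups of $A_7$ have indices $7$, $15$, $15$, and $35$, so to force $\langle K, P\rangle = A_7$ for every Sylow $2$-subgroup $P$ (which has index $\smash{35}$), the subgroup $K$ would need to meet the other maximal subgroups in a way that a single cyclic or $p$-group cannot. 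So the first step is to record the maximal subgroups of $A_7$ and their indices, and confirm that the naive approach fails.

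The key idea is to build a larger acting group $E$ that still satisfies the hypotheses of Theorem~\ref{thm:smith}. First I would choose a Sylow $p$-subgroup $P$ of $N = L^t$ (with $L = A_7$) for a suitable odd prime $p$, and a second subgroup realized diagonally, so as to act on $\cosetlat(G,N)$ by the combined left/right translation of Lemma~\ref{lem:LRactionFixed}. By that lemma, $P \times K^{\diag}$ fixes $Hx$ exactly when $\langle P, (K^{\diag})^{x^{-1}}\rangle \leq H$; following the proof of Lemma~\ref{lem:UnivProd}, the diagonal reduces the question to generation inside a single copy of $A_7$. The task is therefore to exhibit an acting group whose image inside one copy of $A_7$, together with any Sylow $2$-subgroup, generates all of $A_7$, while the acting group as a whole still admits the normal series $P \normalineq H \normalineq E$ of Theorem~\ref{thm:smith}. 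Concretely, I expect to take $p$ an odd prime (likely $p=3$, $5$, or $7$ depending on the index arithmetic) and to augment the $p$-part with a cyclic piece and possibly an $r$-group, invoking the flexibility noted in Remark~\ref{rem:SmithAppl}: groups of the form $P \times K$ or $(P \times K) \semidirect R$ all satisfy the Smith--Oliver hypotheses. The point is that $A_7$ admits a pair of subgroups of coprime-ish order whose join is $A_7$ but which, individually, is neither cyclic nor of prime-power order — yet whose \emph{combined} action fits the $P \normalineq H \normalineq E$ pattern.

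Once such an $E$ is in hand, the argument closes exactly as before: I would show $\cosetlat(G,N)^E = \emptyset$ by verifying that any fixed coset $Hx$ would force $H$ to contain $N$, contradicting $H < G$ and $HN = G$, just as in Lemma~\ref{lem:UnivpgenFixedpts} and Corollary~\ref{cor:EmptyfixProd}. Then, since $\Delta\cosetlat(G,N)$ admits a fixed-point-free action by a group $E$ satisfying the hypotheses of Theorem~\ref{thm:smith}, it cannot be $\ff_2$-acyclic. The main obstacle, and the step requiring genuine care, is the group-theoretic computation inside $A_7$: I must pin down which subgroups universally (or near-universally) $2$-generate $A_7$, control the indices of all maximal subgroups against the chosen primes $p$ and $r$, and assemble these pieces into a single group $E$ of the restricted form allowed by Smith--Oliver. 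This is essentially a finite but delicate verification about the subgroup structure of $A_7$ — checking that every maximal subgroup of $A_7$ has index divisible by $2$ or by the chosen odd prime, in the spirit of Remark~\ref{rem:UnivSylpr} — which is exactly why $A_7$ was singled out for separate treatment.
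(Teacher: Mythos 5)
There is a genuine gap, and it is fatal to the proposal in both of the ways you might try to run it. First, the prime arithmetic: in Theorem~\ref{thm:smith} the prime $p$ of the normal $p$-subgroup at the \emph{bottom} of the series $P\normalineq H\normalineq E$ must match the coefficient field, so a fixed-point-free action of your $E$ rules out $\ff_{p}$-acyclicity only for that $p$. Your plan takes $P$ to be a Sylow $p$-subgroup of $N$ for an \emph{odd} prime $p\in\{3,5,7\}$, which at best proves that $\Delta\cosetlat(G,N)$ is not $\ff_{p}$-acyclic for that odd $p$ --- not the statement of Lemma~\ref{lem:a7}, and useless for the induction, since Corollary~\ref{cor:kun} requires the \emph{same} prime at every stage and $p=2$ is the only prime that works for all the other simple groups. (Your index arithmetic can indeed be made to succeed at odd primes: every maximal subgroup of $A_{7}$ has index divisible by $3$ or $7$ --- the indices are $7,15,15,21,35$; note you omitted the $S_{5}$ of index $21$, and a Sylow $2$-subgroup has index $315$, not $35$ --- so by Remark~\ref{rem:UnivSylpr} a Sylow $3$-subgroup universally $7$-generates $A_{7}$. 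But this only obstructs $\ff_{7}$-acyclicity.) Second, once you are forced to put a $2$-group at the bottom, your translation-only plan inside $G\times G$ provably cannot work: since all five indices above are odd, \emph{every} maximal subgroup of $A_{7}$ contains a Sylow $2$-subgroup, so \emph{no} proper subgroup $K<A_{7}$ of any structure universally $2$-generates $A_{7}$ (if $K\leq M$ with $M$ maximal, pick a Sylow $2$-subgroup inside $M$). Concretely, already in the case $G=N=A_{7}$, $t=1$: given any Sylow $2$-subgroup $P$ and any proper $K$, choose a maximal $M\geq K$ and $g$ with $P\leq M^{g}$; then by Lemma~\ref{lem:LRactionFixed} the coset $M^{g}g^{-1}$ is fixed by $P\times K$. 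So no ``pair of subgroups'' acting by left/right translation can give a fixed-point-free action with a $2$-group at the bottom of the Smith series.

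The paper's proof escapes $G\times G$, and this is the idea your proposal is missing. The only proper subgroups of $A_{7}$ containing both a Sylow $2$-subgroup and a $7$-cycle are the subgroups isomorphic to $PGL_{3}(2)$, which fall into exactly two conjugacy classes, and the outer involution $\varphi$ of $A_{7}$ (conjugation by $(1,2)(3,4)(5,6)$) swaps these two classes while normalizing a Sylow $2$-subgroup and a Sylow $7$-subgroup (Lemma~\ref{lem:A7struc}). This yields an involution $\rho\in\Aut(N)$ for $N=A_{7}^{t}$ such that no proper $\rho$-invariant subgroup of a factor $L_{i}$ contains both $P_{i}$ and $\langle h_{i}\rangle$ (Lemma~\ref{lem:AutomOfN}). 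A genuinely nontrivial step you do not anticipate is then extending $\rho$ from $N$ to an involution $\theta\in\Aut(G)$, which requires a theorem of Jin (Lemma~\ref{lem:ExtendAut}), applicable because $\Inn(N)\rho$ is central in $\Out(N)\cong\zz_{2}\wr S_{t}$. The acting group is $E=(P\times K)\semidirect\langle\theta\rangle\leq(G\times G)\semidirect\Aut(G)$ with $P$ a Sylow $2$-subgroup of $N$ and $K$ cyclic generated by a diagonal $7$-element --- so the bottom of the Smith series is a $2$-group, as required --- and fixed-point-freeness uses the $\theta$-invariance of a fixed $H$ together with a projection argument onto each $L_{i}$. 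Your proposal correctly identifies the shape of the target ($E$ of Smith--Oliver type, emptiness of $\cosetlat(G,N)^{E}$, the final contradiction $N\leq H$), but without the automorphism $\theta$ the construction cannot be completed.
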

Theorem~\ref{thm:MainIndStep} (and so Theorem~\ref{thm:Main})
follows from Lemma~\ref{lem:amns}, Corollary~\ref{cor:UnivCyclicorPp},
Lemma~\ref{lem:UniversalGeneration}, and Lemma~\ref{lem:a7}.

\section{\label{sec:Alternating}Alternating groups of high degree}

Here we prove Claims~(1) and (2) of Lemma~\ref{lem:UniversalGeneration}. 

Our proof involves the standard division of subgroups of $S_{n}$
into three classes. Such a subgroup $H$ is \textit{transitive} if
for each $i,j\in[n]$ there is some $x\in H$ such that $ix=j$, and
\textit{intransitive} otherwise. A transitive subgroup $H$ is \textit{imprimitive}
if there is some partition $\pi=\{\pi_{1},\ldots,\pi_{\ell}\}$ of
$[n]$ into subsets, such that $1<\ell<n$, and such that for each
$x\in H$ and each $i\in[\ell]$, there exists some $j\in[\ell]$
with $\pi_{i}x=\pi_{j}$. In this case, we say that $H$ \textit{stabilizes}
$\pi$. A subgroup $H$ is \textit{primitive} if $H$ is transitive
but not imprimitive. So, each subgroup of $S_{n}$ is intransitive,
imprimitive or primitive. We begin with a classical result of Jordan.
\begin{thm}[{Jordan \cite{Jordan:1875}. See also \cite[Example 3.3.1]{Dixon/Mortimer:1996}}]
\label{thm:jordan} If $n\geq9$, then every primitive subgroup of
$S_{n}$ containing an element with exactly $n-4$ fixed points contains
$A_{n}$. 
\end{thm}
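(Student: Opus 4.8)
The plan is to reduce to the classical fact that a primitive subgroup of $S_{n}$ containing a $3$-cycle already contains $A_{n}$, and to reach such a $3$-cycle by shrinking the support of the given element. First I would note that an element $g$ fixing exactly $n-4$ points has support of size $4$ and acts without fixed points on that support, so $g$ is either a $4$-cycle or a product of two disjoint transpositions. A $4$-cycle is odd, and its square $(a\,c)(b\,d)$ is an even permutation of support $4$; a double transposition is already of this form. Hence in every case $G$ contains a double transposition $t$, and if $g$ is a $4$-cycle then $G$ also contains an odd permutation.

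Next I would carry out a minimal-degree reduction starting from $t$. The basic tool is the containment $\mathrm{supp}(x^{-1}x^{y})\subseteq\mathrm{supp}(x)\cup\mathrm{supp}(x)^{y}$, valid for all $x,y$, together with the observation that a point on which $x$ and $x^{y}$ act the same way is fixed by $x^{-1}x^{y}$. Using primitivity I would produce a conjugate $t^{h}$ whose support meets $\mathrm{supp}(t)$ in some but not all of its points, so that a suitable word in $t$ and $t^{h}$ is a non-identity even permutation of support at most $3$. A concrete model to aim for is $(a\,b)(c\,d)\cdot(a\,b)(c\,e)=(c\,d\,e)$, which is realized once two conjugate supports share three letters in the right configuration. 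Iterating the reduction where necessary drives the minimal degree of $G$ down to $3$, exhibiting a $3$-cycle in $G$.

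To finish I would prove that a primitive group containing a $3$-cycle contains $A_{n}$. Consider the graph on $[n]$ in which $\alpha$ and $\beta$ are joined when some $3$-cycle of $G$ moves both; this graph is preserved by $G$, so its connected-component partition is a block system. By primitivity the partition is trivial or universal, and since a single $3$-cycle already joins three points it must be universal, i.e. the graph is connected. A standard connectivity argument then shows that $G$ contains every $3$-cycle, whence $A_{n}=\langle\text{$3$-cycles}\rangle\leq G$. If $g$ was a $4$-cycle, the extra odd permutation upgrades this to $G=S_{n}$; either way $A_{n}\leq G$.

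The hard part will be the shrinking step. Securing, from primitivity alone, a conjugate $t^{h}$ whose support overlaps $\mathrm{supp}(t)$ in exactly the number of points needed, and verifying that the resulting word does not collapse to the identity, is precisely where $n\geq 9$ is used: it leaves enough room outside a support of size $4$ to slide conjugates into partial overlap. The degenerate possibility to rule out is that every conjugate support is either equal to or disjoint from $\mathrm{supp}(t)$, since this is exactly the configuration that would annihilate all the relevant commutators; excluding it amounts to showing that such an all-or-nothing overlap pattern produces a $G$-invariant partition, contradicting primitivity.
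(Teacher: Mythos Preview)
The paper does not prove this statement; it is quoted as a classical theorem of Jordan, with a pointer to \cite[Example~3.3.1]{Dixon/Mortimer:1996}, and is used as a black box. So there is no proof in the paper to compare against, and I can only assess your sketch on its own merits.

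The architecture is the classical one and is sound: pass to a double transposition $t$, use primitivity to find a conjugate $t^{h}$ whose support overlaps $\mathrm{supp}(t)$ properly, manufacture a $3$-cycle, and finish with the (correctly argued) fact that a primitive group containing a $3$-cycle contains $A_{n}$. Your block argument ruling out the all-or-nothing overlap pattern is also correct: if every $\mathrm{supp}(t^{g})$ were equal to or disjoint from $\mathrm{supp}(t)$, then $\mathrm{supp}(t)$ would be a nontrivial block of size $4<n$.

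The genuine gap is the shrinking step. You exhibit the favorable configuration $(a\,b)(c\,d)\cdot(a\,b)(c\,e)=(c\,d\,e)$, but primitivity only hands you \emph{some} partial overlap, and the remaining configurations do not all cooperate. With a two-point overlap one can get, for example, $t\,t^{h}=(a\,b\,e)(c\,d\,f)$ (support $6$, order $3$, no power is a $3$-cycle) or $t\,t^{h}=(c\,d)(e\,f)$ (so the commutator $[t,t^{h}]=(t\,t^{h})^{2}$ is trivial); with a three-point overlap one can get a $5$-cycle rather than a $3$-cycle. ``Iterating where necessary'' is not yet an argument: you need either a full case analysis showing every partial-overlap configuration eventually produces a $3$-cycle (or a prime cycle of length at most $n-3$, which suffices by another theorem of Jordan), or a different organizing device such as first establishing $2$-transitivity. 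This is exactly where the hypothesis $n\geq 9$ does real work, and it is the content that the reference to Dixon--Mortimer is meant to supply.
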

When $n\geq4$, every Sylow $2$-subgroup of $A_{n}$ contains an
element with exactly $n-4$ fixed points, namely, the product of two
disjoint transpositions.
\begin{cor}
\label{cor:primitive} If $n\geq9$, then no primitive proper subgroup
of $A_{n}$ contains a Sylow $2$-subgroup of $A_{n}$. 
\end{cor}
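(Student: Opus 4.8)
The plan is to combine Jordan's theorem (Theorem~\ref{thm:jordan}) with the observation recorded immediately before the corollary: when $n\geq 4$, any Sylow $2$-subgroup $S$ of $A_{n}$ contains a product of two disjoint transpositions, and such an element fixes exactly $n-4$ points. Suppose for contradiction that $H$ is a primitive proper subgroup of $A_{n}$ containing $S$. First I would note that a primitive subgroup of $A_{n}$ is in particular a primitive subgroup of $S_{n}$, so the hypothesis of Theorem~\ref{thm:jordan} is available.

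Next I would verify that $H$ actually contains an element with exactly $n-4$ fixed points. This is where the preceding remark does the work: since $S\leq H$ and $S$ contains a product of two disjoint transpositions $\tau$, we have $\tau\in H$, and $\tau$ fixes precisely $n-4$ of the points in $[n]$ (it moves the four points involved in the two transpositions and fixes the rest). Thus $H$ is a primitive subgroup of $S_{n}$ containing an element with exactly $n-4$ fixed points.

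Applying Theorem~\ref{thm:jordan} (using $n\geq 9$) then forces $A_{n}\leq H$. Since $H\leq A_{n}$ by hypothesis, this gives $H=A_{n}$, contradicting that $H$ is a \emph{proper} subgroup. Hence no primitive proper subgroup of $A_{n}$ can contain a Sylow $2$-subgroup.

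I do not anticipate a serious obstacle here; the corollary is essentially a packaging of Jordan's theorem together with the elementary fixed-point count for products of two transpositions. The only point that warrants a moment's care is confirming that a product of two disjoint transpositions really does lie in some (hence, after conjugating, every) Sylow $2$-subgroup of $A_{n}$ when $n\geq 9$—but this is exactly the content of the sentence preceding the statement, which I am entitled to assume, and the count of fixed points is immediate once the element is in hand.
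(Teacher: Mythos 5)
Your proof is correct and matches the paper's reasoning exactly: the corollary is stated there as an immediate consequence of Jordan's theorem (Theorem~\ref{thm:jordan}) together with the preceding observation that every Sylow $2$-subgroup of $A_{n}$ (for $n\geq4$) contains a product of two disjoint transpositions, which fixes exactly $n-4$ points. Your write-up simply makes explicit the contradiction argument that the paper leaves implicit; there is nothing to correct.
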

Suppose that the transitive subgroup $H\leq S_{n}$ stabilizes the
partition $\pi=\{\pi_{1},\ldots,\pi_{\ell}\}$ of $[n]$, with $1<\ell<n$.
The transitivity of $H$ forces $|\pi_{i}|=|\pi_{j}|$ for all $i,j\in[\ell]$.
Each $\pi_{i}$ has size $d=n/\ell$. The full stabilizer $G_{\pi}$
of $\pi$ in $S_{n}$ (which contains $H$) is isomorphic with the
wreath product $S_{d}\wr S_{\ell}$, and thus has order $d!^{\ell}\ell!$.
Now $G_{\pi}\not\leq A_{n}$, as $G_{\pi}$ contains a transposition.
It follows that $G_{\pi}\cap A_{n}$ contains a Sylow $2$-subgroup
of $A_{n}$ if and only if $\frac{n!}{d!^{\ell}\ell!}$ is odd.
\begin{lem}
\label{lem:impodd} If $n$ is odd, then no imprimitive subgroup of
$A_{n}$ contains a Sylow $2$-subgroup of $A_{n}$. \end{lem}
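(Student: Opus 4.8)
The plan is to argue by contradiction, leveraging the parity computation set up in the paragraph preceding the lemma. Suppose $H$ is an imprimitive subgroup of $A_{n}$ that contains a Sylow $2$-subgroup of $A_{n}$, and let $\pi=\{\pi_{1},\dots,\pi_{\ell}\}$ be a partition of $[n]$ into $\ell$ blocks of common size $d=n/\ell$ stabilized by $H$, with $1<\ell<n$. Since $H\leq G_{\pi}\cap A_{n}$, the subgroup $G_{\pi}\cap A_{n}$ also contains a Sylow $2$-subgroup of $A_{n}$. By the discussion preceding the lemma, this forces $\frac{n!}{d!^{\ell}\ell!}$ to be odd, so it suffices to show that this integer is always even when $n$ is odd.

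First I would record the arithmetic constraints. Since $n=d\ell$ is odd, both $d$ and $\ell$ are odd, and since $1<\ell<n$ we have $1<d<n$ as well, so in fact $d,\ell\geq 3$; in particular neither is a power of $2$. Next I would compute the $2$-adic valuation of $\frac{n!}{d!^{\ell}\ell!}$ using Legendre's formula $v_{2}(m!)=m-s_{2}(m)$, where $s_{2}(m)$ denotes the number of $1$'s in the binary expansion of $m$. Because $n=\ell d$, the leading $n$-terms cancel and one obtains the exact identity
\[
v_{2}\!\left(\frac{n!}{d!^{\ell}\ell!}\right)=\ell(s_{2}(d)-1)+s_{2}(\ell)-s_{2}(n).
\]
The goal is then to show this right-hand side is at least $1$.

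The one nontrivial input is a bound on $s_{2}(n)=s_{2}(d\ell)$, and this is the step I expect to be the main obstacle: a crude estimate such as $s_{2}(n)\leq\log_{2}n+1$ is too weak when $\ell$ is small and $d$ is large. The right tool is the submultiplicativity of the binary digit sum, $s_{2}(ab)\leq s_{2}(a)s_{2}(b)$, which follows from subadditivity $s_{2}(x+y)\leq s_{2}(x)+s_{2}(y)$ by writing $a$ as a sum of powers of $2$ and using $s_{2}(2^{i}b)=s_{2}(b)$. Substituting $s_{2}(n)\leq s_{2}(d)s_{2}(\ell)$ into the identity and factoring gives
\[
v_{2}\!\left(\frac{n!}{d!^{\ell}\ell!}\right)\geq\ell(s_{2}(d)-1)+s_{2}(\ell)-s_{2}(d)s_{2}(\ell)=(s_{2}(d)-1)(\ell-s_{2}(\ell)).
\]
Finally, $s_{2}(d)-1\geq 1$ because $d\geq 3$ is odd and hence not a power of $2$, and $\ell-s_{2}(\ell)\geq 1$ because $\ell\geq 2$. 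Thus the valuation is at least $1$, the index $\frac{n!}{d!^{\ell}\ell!}$ is even, and we reach the desired contradiction.
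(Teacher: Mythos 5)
Your proof is correct, and it shares the paper's skeleton: both arguments invoke the discussion preceding the lemma to reduce to showing that $\frac{n!}{d!^{\ell}\ell!}$ is even whenever $n=d\ell$ with $1<\ell<n$ and $n$ odd. But your parity computation is genuinely different from the paper's. The paper factors the index combinatorially as $\prod_{j=1}^{\ell}\binom{jd-1}{d-1}$ and then exhibits a single even factor via the identity $\binom{2d-1}{d-1}=\frac{2d-1}{d}\cdot2\binom{2d-3}{d-1}$, using only that $d$ is odd --- a two-line argument with no valuation machinery. You instead compute the exact $2$-adic valuation with Legendre's formula, $v_{2}\bigl(\frac{n!}{d!^{\ell}\ell!}\bigr)=\ell(s_{2}(d)-1)+s_{2}(\ell)-s_{2}(n)$, and your key extra input, the submultiplicativity $s_{2}(d\ell)\leq s_{2}(d)s_{2}(\ell)$, is exactly right: you correctly diagnosed that a crude bound on $s_{2}(n)$ fails when $\ell$ is small and $d$ is large, and your derivation of submultiplicativity from subadditivity is sound. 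The resulting factorization $(s_{2}(d)-1)(\ell-s_{2}(\ell))\geq1$ is valid; for the final inequality you might note explicitly that $\ell-s_{2}(\ell)=v_{2}(\ell!)\geq1$ for $\ell\geq2$, which is the cleanest justification of the step you assert. As for what each approach buys: the paper's is more elementary and localizes the even factor explicitly, while yours yields the exact valuation, a quantitative lower bound, and slightly greater generality --- your argument needs only that $d$ is not a power of $2$, whereas the paper's identity needs $d$ odd (neither argument actually uses oddness of $\ell$).
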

\begin{proof}
By the preceding discussion, it suffices to show that $\frac{n!}{d!{}^{\ell}\ell!}$
is even whenever $d$ is a nontrivial proper divisor of $n$ and $\ell=n/d$.
Straightforward manipulations yield 
\[
\frac{n!}{d!{}^{\ell}\ell!}=\prod_{j=1}^{\ell}{jd-1 \choose d-1}.
\]
It suffices to show any term in the product on the right is even.
We calculate 
\[
{2d-1 \choose d-1}=\frac{2d-1}{d}{2d-2 \choose d-1}=\frac{2d-1}{d}\cdot2{2d-3 \choose d-1}.
\]
Since the divisor $d$ of $n$ is odd, the result follows.\end{proof}
\begin{lem}
\label{lem:impeven} If $n$ is even, then no imprimitive subgroup
of $A_{n}$ contains an $(n-1)$-cycle.\end{lem}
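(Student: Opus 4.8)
The plan is to derive a contradiction directly from the cycle structure, rather than from a parity count as in Lemma~\ref{lem:impodd}. Suppose toward a contradiction that an imprimitive subgroup $H\leq A_{n}$ stabilizes a partition $\pi=\{\pi_{1},\ldots,\pi_{\ell}\}$, with blocks of the common size $d=n/\ell$ guaranteed by the transitivity of $H$, where $1<d<n$, and that $H$ contains an $(n-1)$-cycle $h$. The key structural fact I want to exploit is elementary: $h$ fixes exactly one point, call it $a$, and permutes the remaining $n-1$ points in a single cycle, so the orbit under $\langle h\rangle$ of \emph{every} non-fixed point is the full support, of size $n-1$.

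First I would locate the fixed point within the block system. Let $\pi_{1}$ be the block containing $a$. Since $h$ stabilizes $\pi$, the image $\pi_{1}^{h}$ is again a block of $\pi$; but $a=a^{h}\in\pi_{1}^{h}$ while also $a\in\pi_{1}$, so the disjointness of distinct blocks forces $\pi_{1}^{h}=\pi_{1}$. Hence $h$ fixes the block $\pi_{1}$ setwise, and therefore so does all of $\langle h\rangle$.

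The contradiction then follows from a size estimate. Since $d=|\pi_{1}|>1$, there is a point $x\in\pi_{1}\setminus\{a\}$, and because $\langle h\rangle$ stabilizes $\pi_{1}$, the whole orbit of $x$ under $\langle h\rangle$ is contained in $\pi_{1}$ and so has size at most $d\leq n/2$. On the other hand $x$ lies in the support of the $(n-1)$-cycle $h$, so this orbit has size exactly $n-1$. Thus $n-1\leq n/2$, which is impossible for $n\geq 4$.

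I do not anticipate a serious obstacle here; the argument turns entirely on the single observation that the block containing the unique fixed point of $h$ must itself be $h$-invariant, which then traps the long orbit inside a block that is too small to hold it. The only points needing care are recording, via the transitivity of $H$, that all blocks share the common size $d=n/\ell$ (so that $1<d\leq n/2$), and noting that an $(n-1)$-cycle is indeed an even permutation when $n$ is even, so that the hypothesis of the lemma is not vacuous.
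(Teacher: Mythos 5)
Your proof is correct and follows essentially the same route as the paper: both arguments observe that the block containing the unique fixed point of the $(n-1)$-cycle $h$ must be $h$-invariant, and then derive a contradiction from the orbit of a second point of that block under $\langle h\rangle$ being too large to fit inside it. The paper phrases the contradiction as $[n]=\pi_{1}$ rather than via the size bound $n-1\leq n/2$, but this is a cosmetic difference only.
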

\begin{proof}
Assume for contradiction that the $(n-1)$-cycle $h\in A_{n}$ stabilizes
the partition $\pi=\{\pi_{1},\ldots,\pi_{\ell}\}$ with $1<\ell<n$.
Without loss of generality, the unique fixed point $j$ of $h$ lies
in $\pi_{1}$. Now $\pi_{1}h=\pi_{1}$. As $\langle h\rangle$ is
transitive on $[n]\setminus\{j\}$ and $|\pi_{1}|>1$, we obtain the
contradiction $[n]=\pi_{1}$. 
\end{proof}
No intransitive subgroup of $A_{n}$ contains an $n$-cycle. Thus
Claim~(1) follows from Corollary~\ref{cor:primitive} and Lemma~\ref{lem:impodd}.
To prove Claim~(2), it remains to show that when $n\geq10$ is even,
no intransitive subgroup of $A_{n}$ contains both a Sylow $2$-subgroup
of $A_{n}$ and an $(n-1)$-cycle. It suffices to show a Sylow $2$-subgroup
$P$ of $A_{n}$ contains a fixed-point-free element. Depending on
$n\mod4$, $P$ contains a conjugate of either 
\[
(1,2,3,4)(5,6)\cdots(n-1,n)\quad\mbox{or}\quad(1,2)(3,4)\cdots(n-1,n).
\]

\section{\label{sec:A7}The alternating group $A_{7}$ }

Here we prove Lemma~\ref{lem:a7}. 

The conclusion of Claim~(1) of Lemma~\ref{lem:UniversalGeneration}
does not hold for $A_{7}$. Indeed, $A_{7}$ has proper primitive
subgroups that contain both a 7-cycle and a Sylow $2$-subgroup of
$A_{7}$. (Such subgroups are isomorphic to $PGL_{3}(2)$, and are
embedded in $A_{7}$ through actions on points and lines in the Fano
plane.) As a result, we cannot apply Corollary~\ref{cor:EmptyfixProd}
or Corollary~\ref{cor:UnivCyclicorPp}. However, we can still apply
Smith Theory (as in Theorem~\ref{thm:smith}) when $G$ has a minimal
normal subgroup $N=A_{7}^{t}$.

We begin by collecting some information on $A_{7}$. All these facts
are straightforward to confirm, or can be verified with \cite{GAP4.4.12}
or \cite{Atlas}.
\begin{lem}
\label{lem:A7struc}The group $A_{7}$ has the following properties.
\begin{enumerate}
\item There exist conjugacy classes $\kone$ and $\ktwo$ of subgroups of
$A_{7}$ satisfying the following conditions.

\begin{enumerate}
\item If $K\in\kboth$, then $K\cong PGL_{3}(2)$, and so $[A_{7}:K]=15$.
\item A proper subgroup $K$ of $A_{7}$ contains both a Sylow $2$-subgroup
of $A_{7}$ and a 7-cycle if and only if $K\in\kboth$.
\item If $K\in\kboth$, $P$ is a Sylow $2$-subgroup of $K$, and $R$
is a Sylow $7$-subgroup of $K$, then $\left\langle P^{r}\,:\, r\in R\right\rangle =K$.
\end{enumerate}
\item There is an involution $\varphi\in\Aut(A_{7})$ satisfying the following
conditions.

\begin{enumerate}
\item The automorphism $\varphi$ normalizes both a Sylow $2$-subgroup
and a Sylow $7$-subgroup of $A_{7}$.
\item If $K\in\kone$ then $K^{\varphi}\in\ktwo$, and if $K\in\ktwo$ then
$K^{\varphi}\in\kone$.
\end{enumerate}

\noindent Indeed, the automorphism arising from conjugation by $(1,2)(3,4)(5,6)\in S_{7}$
has the desired properties.

\end{enumerate}
\end{lem}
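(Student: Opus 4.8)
The plan is to treat $A_{7}$ as a concrete permutation group and reduce every assertion to order arithmetic in its (well-known) subgroup lattice, supplemented by two explicit computations for the automorphism $\varphi$. I will use freely that $|A_{7}|=2520=2^{3}\cdot 3^{2}\cdot 5\cdot 7$, that the maximal subgroups of $A_{7}$ have orders $360,168,168,120,72$ (with the two of order $168$ isomorphic to $L_{3}(2)\cong PGL_{3}(2)$), and that $L_{3}(2)$ is simple with maximal subgroups of orders $24$ and $21$; all of these are recorded in the Atlas and confirmable in \cite{GAP4.4.12}. I take $\kone$ and $\ktwo$ to be the two $A_{7}$-conjugacy classes of subgroups isomorphic to $L_{3}(2)$, realized as the images of $L_{3}(2)$ in its actions on the seven points and on the seven lines of the Fano plane. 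Item (a) is then immediate, as $|L_{3}(2)|=168$ gives index $15$. For item (b), the forward direction is arithmetic: $|L_{3}(2)|=2^{3}\cdot 3\cdot 7$, so a copy contains a full Sylow $2$-subgroup of $A_{7}$ and an element of order $7$ (necessarily a $7$-cycle). Conversely, if a proper $K<A_{7}$ contains a Sylow $2$-subgroup and a $7$-cycle, then $56\mid |K|$; since $K$ lies in a maximal subgroup $M$ with $7\mid |M|$, and the only such $M$ are the copies of $L_{3}(2)$, we get $56\mid |K|\mid 168$, forcing $|K|\in\{56,168\}$. An order-$56$ subgroup would have index $3$ in the simple group $L_{3}(2)$, which is impossible, so $K=M\in\kboth$.

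For item (c), set $Q:=\langle P^{r}:r\in R\rangle\le K$. Then $P\le Q$ (take $r=1$), and $R$ normalizes $Q$, since conjugation by $r_{0}\in R$ merely permutes the generators $P^{r}$; hence $N_{K}(Q)\supseteq\langle P,R\rangle$. But a Sylow $2$-subgroup and a Sylow $7$-subgroup already generate $K$: indeed $56\mid |\langle P,R\rangle|$ exceeds the order of every proper subgroup of $L_{3}(2)$, so $\langle P,R\rangle=K$. Therefore $N_{K}(Q)=K$, i.e.\ $Q\normalineq K$, and as $K$ is simple with $Q\ne 1$ we conclude $Q=K$.

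For Claim (2) I verify that $\varphi$ equals conjugation by $g:=(1,2)(3,4)(5,6)$. Being a product of three transpositions, $g$ is an odd involution, so $\varphi$ is an outer involution of $A_{7}$. For (a) I exhibit invariant Sylow subgroups explicitly: $g$ inverts the $7$-cycle $h=(7,1,3,5,6,4,2)$ (it is exactly the reflection of this cycle fixing $7$), so $g$ normalizes the Sylow $7$-subgroup $\langle h\rangle$; and a short check shows $g$ normalizes the Sylow $2$-subgroup $P=\langle (1,2,3,4)(5,6),\,(1,3)(5,6)\rangle$ of order $8$, the first generator being sent to its inverse and the second into $P$. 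For (b) I use that $N_{S_{7}}(K)=K\le A_{7}$ for $K\in\kboth$: since $K$ is $2$-transitive we have $C_{S_{7}}(K)=1$, and any $\tau\in N_{S_{7}}(K)$ sends each point-stabilizer $K_{i}$ to $K_{\tau(i)}$, hence permutes the class of point-type $S_{4}$'s among themselves; as the graph automorphism of $L_{3}(2)$ instead interchanges the point-type and line-type classes, $\tau$ induces an inner automorphism, giving $N_{S_{7}}(K)=K$. Consequently the single $S_{7}$-class of copies of $L_{3}(2)$ splits into exactly $[S_{7}:A_{7}]=2$ classes under $A_{7}$, namely $\kone$ and $\ktwo$; and the odd permutation $g$, which preserves this $S_{7}$-class, cannot fix either $A_{7}$-class (else $g$ would differ from a member of $A_{7}$ by an element of $N_{S_{7}}(K)=K\le A_{7}$), so $\varphi$ interchanges $\kone$ and $\ktwo$.

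The main obstacle I anticipate is establishing the two-class structure underlying (1) and (2)(b): that the copies of $L_{3}(2)$ form one $S_{7}$-class splitting into exactly two $A_{7}$-classes fused by the outer automorphism. This is precisely where the detailed subgroup structure of $A_{7}$ must be pinned down, through the self-normalizing property $N_{S_{7}}(L_{3}(2))=L_{3}(2)$ and the non-realizability of the graph automorphism inside $S_{7}$; the remaining assertions are then either order arithmetic or the two explicit permutation computations above.
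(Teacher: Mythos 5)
Your proof is correct, but it is a genuinely different route from the paper's, for the simple reason that the paper offers no argument at all for this lemma: it states that the facts ``are straightforward to confirm, or can be verified with \cite{GAP4.4.12} or \cite{Atlas}'' and moves on. You instead give an essentially self-contained proof whose only imported data are the standard maximal-subgroup lists for $A_{7}$ (orders $360,168,168,120,72$) and for $L_{3}(2)$ (orders $24,21$). From these you get (1)(a) and (1)(b) by clean order arithmetic ($56\mid|K|$ forces $K$ into an order-$168$ maximal subgroup, and index $3$ in a simple group of order $168$ is impossible); (1)(c) by the normalizer argument ($R$ permutes the generators $P^{r}$, $\langle P,R\rangle=K$ by order considerations, so $Q\normalineq K$ and simplicity finishes); and (2)(b) by the fusion argument: $C_{S_{7}}(K)=1$ from $2$-transitivity, an element of $N_{S_{7}}(K)$ preserves the class of point stabilizers and hence induces an inner automorphism (since the unique outer coset of $\Aut(L_{3}(2))$ is represented by the graph automorphism, which swaps the two classes of $S_{4}$'s), so $N_{S_{7}}(K)=K\leq A_{7}$, whence the $S_{7}$-class splits into exactly two $A_{7}$-classes interchanged by every odd permutation. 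Your two explicit computations also check out: with $h=(7,1,3,5,6,4,2)$ and $g=(1,2)(3,4)(5,6)$ one indeed has $h^{g}=h^{-1}$, and $g$ normalizes the dihedral Sylow $2$-subgroup $P=\langle(1,2,3,4)(5,6),(1,3)(5,6)\rangle$ since $a^{g}=a^{-1}$ and $b^{g}=(2,4)(5,6)=a^{2}b\in P$. What each approach buys: the paper's one-line citation is appropriate given that the lemma is a finite check and the paper already leans on the Classification and Atlas data elsewhere; your version makes the lemma verifiable by hand, isolates exactly which Atlas facts are consumed, and replaces the opaque machine check of (2)(b) with a conceptual normalizer-and-fusion argument -- the one genuinely nontrivial point, which you correctly identify as the crux. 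The only improvement I would suggest is to cite explicitly the standard facts that $\Out(L_{3}(2))\cong\zz_{2}$ is generated by the graph automorphism and that the centralizer in $S_{n}$ of a $2$-transitive subgroup is trivial, since these carry the weight of your (2)(b).
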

The next result follows directly from Lemma~\ref{lem:A7struc} and
the universal property of direct products.
\begin{lem}
\label{lem:AutomOfN}Let $N$ be the direct product $\prod_{i=1}^{t}L_{i}$
with each $L_{i}\cong A_{7}$. There is an involution $\rho\in\Aut(N)$
such that the following claims hold for each $i\in[t]$.
\begin{enumerate}
\item The automorphism $\rho$ normalizes $L_{i}$.
\item The automorphism $\rho$ normalizes both a Sylow $2$-subgroup $P_{i}$
of $L_{i}$ and a Sylow $7$-subgroup $\left\langle h_{i}\right\rangle $
of $L_{i}$.
\item If $H\leq L_{i}$ is normalized by $\rho$ and contains both $P_{i}$
and $\left\langle h_{i}\right\rangle $, then $H=L_{i}$.
\end{enumerate}
Moreover, $\rho$ normalizes both $P:=P_{1}P_{2}\cdots P_{t}$ and
$K:=\left\langle (h_{1},h_{2},\dots,h_{t})\right\rangle $.\end{lem}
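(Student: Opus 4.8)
The plan is to build $\rho$ as a direct product of suitably transported copies of the automorphism $\varphi$ furnished by Lemma~\ref{lem:A7struc}(2). Fix a reference copy of $A_7$ and let $\varphi\in\Aut(A_7)$, together with a $\varphi$-invariant Sylow $2$-subgroup $P_0$ and a $\varphi$-invariant Sylow $7$-subgroup $\langle h_0\rangle$, be as in that lemma. For each $i\in[t]$ I would fix an isomorphism $f_i\colon A_7\to L_i$ and set $\varphi_i:=f_i\varphi f_i^{-1}\in\Aut(L_i)$, $P_i:=f_i(P_0)$, and $h_i:=f_i(h_0)$. Each $\varphi_i$ is then an involution normalizing the Sylow $2$-subgroup $P_i$ and the Sylow $7$-subgroup $\langle h_i\rangle$. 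Defining $\rho$ on $N=\prod_{i=1}^t L_i$ by $(x_1,\dots,x_t)\mapsto(x_1^{\varphi_1},\dots,x_t^{\varphi_t})$ gives, by the universal property of the direct product, a well-defined automorphism of $N$; it is an involution because each $\varphi_i$ is, and it stabilizes each factor $L_i$ with $\rho|_{L_i}=\varphi_i$. This yields claims~(1) and~(2) at once.

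For claim~(3), suppose $H\leq L_i$ is $\rho$-invariant (equivalently $\varphi_i$-invariant) and contains $P_i$ and $\langle h_i\rangle$, but $H\neq L_i$. Then $f_i^{-1}(H)$ is a proper subgroup of $A_7$ containing the Sylow $2$-subgroup $P_0$ and the order-$7$ element $h_0$, which is a $7$-cycle. By Lemma~\ref{lem:A7struc}(1)(b), $f_i^{-1}(H)\in\kboth$. But $\varphi$ interchanges the two distinct (hence disjoint) conjugacy classes $\kone$ and $\ktwo$ by Lemma~\ref{lem:A7struc}(2)(b), so it cannot fix $f_i^{-1}(H)$; transporting back through $f_i$, the automorphism $\varphi_i$ cannot fix $H$, contradicting $\rho$-invariance. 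Hence $H=L_i$.

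For the final clause, note that $P=P_1\cdots P_t$ is the internal direct product $\prod_i P_i$, and since $\rho$ normalizes each $P_i$ it normalizes $P$. The only step requiring care concerns $K=\langle(h_1,\dots,h_t)\rangle$: I would use that $\varphi$ acts by a single fixed power on the Sylow $7$-subgroup, writing $h_0^\varphi=h_0^a$ with $\gcd(a,7)=1$. Transport then gives $h_i^{\varphi_i}=f_i(h_0^a)=h_i^a$ with the \emph{same} exponent $a$ for every $i$, so $\rho$ sends the generator $(h_1,\dots,h_t)$ to $(h_1^a,\dots,h_t^a)=(h_1,\dots,h_t)^a$, another generator of $K$; thus $K^\rho=K$. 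This uniformity of $a$ across the factors is the subtlety the construction is designed to handle: it would fail if the $\varphi_i$ were chosen independently rather than transported from a single $\varphi$, and it is really the only content beyond routine bookkeeping.
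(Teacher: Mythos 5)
Your proposal is correct and follows essentially the same route as the paper, which simply invokes Lemma~\ref{lem:A7struc} together with the universal property of direct products and realizes $\rho$ concretely as conjugation by $(x,x,\dots,x)\in S_7^t$ with $x=(1,2)(3,4)(5,6)$ --- i.e., the same single automorphism $\varphi$ applied diagonally in every factor. Your explicit observation that transporting one fixed $\varphi$ (rather than choosing the $\varphi_i$ independently) is what makes $\rho$ normalize the cyclic group $K=\left\langle (h_1,\dots,h_t)\right\rangle$ is exactly the point the paper's diagonal construction is built to secure, so you have faithfully filled in the details the paper leaves implicit.
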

\begin{rem}
The automorphism $\rho$ may be realized concretely by embedding $N\cong A_{7}^{t}$
in $S_{7}^{t}$, setting $x:=(1,2)(3,4)(5,6)\in S_{7}$, and conjugating
by the element $(x,x,\dots,x)\in S_{7}^{t}$.
\end{rem}
The next lemma is a special case of a theorem of P.~Jin in \cite{Jin:2007}.
It also can be proved directly using standard facts from the cohomology
of groups (see \cite[Chapter IV]{Brown:1994}, particularly Corollary
IV.6.8 therein).

We write $\Inn(M)$ for the inner automorphism group of a group $M$
and $\Out(M)$ for the outer automorphism group $\Aut(M)/\Inn(M)$.
\begin{lem}[{See \cite[Corollary C]{Jin:2007}}]
\label{lem:ExtendAut} Let $N\normalineq G$ with $Z(N)=1$ and let
$\rho\in\Aut(N)$. If the coset $\Inn(N)\rho$ is in $Z(\Out(N))$,
then there exists some $\theta\in\Aut(G)$ such that
\begin{enumerate}
\item $\left|\theta\right|=\left|\rho\right|$,
\item $\theta$ normalizes $N$, and
\item the restriction $\theta_{N}$ of $\theta$ to $N$ is $\rho$.
\end{enumerate}
\end{lem}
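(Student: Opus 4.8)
The plan is to realize $G$ as a fiber product over $\Out(N)$ and to twist that fiber product by $\rho$. Write $Q:=G/N$ and let $\phi\colon G\to\Aut(N)$ be the homomorphism given by the conjugation action of $G$ on its normal subgroup $N$, so that $\ker\phi=C_{G}(N)$. Composing $\phi$ with the projection $\Aut(N)\to\Out(N)$ and using that $N$ maps into $\Inn(N)$, I obtain the associated outer action $\omega\colon Q\to\Out(N)$ of the extension $1\to N\to G\to Q\to 1$. Writing $\bar{\alpha}$ for the image in $\Out(N)$ of $\alpha\in\Aut(N)$, I consider the fiber product
\[
\Gamma:=\{(\alpha,q)\in\Aut(N)\times Q:\bar{\alpha}=\omega(q)\}
\]
together with the obvious homomorphism $\Phi\colon G\to\Gamma$ sending $g$ to $(\phi(g),gN)$.

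The key structural step, and the place where the hypothesis $Z(N)=1$ enters, is to show that $\Phi$ is an isomorphism. Injectivity is immediate, since $\ker\Phi=C_{G}(N)\cap N=Z(N)=1$. For surjectivity I would take $(\alpha,q)\in\Gamma$, lift $q$ to some $g\in G$, and observe that $\overline{\phi(g)}=\omega(q)=\bar{\alpha}$, so $\alpha$ and $\phi(g)$ differ by an inner automorphism $\iota_{n}$ of $N$ (conjugation by some $n\in N$); adjusting $g$ by $n$ then yields a preimage of $(\alpha,q)$. This is the classical fact that an extension with centerless kernel is \emph{rigid}, i.e.\ determined by its outer action; in cohomological language it reflects the vanishing of the obstruction in $H^{3}(Q;Z(N))$ and of $H^{2}(Q;Z(N))$, both trivial here. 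One may instead simply invoke \cite[Corollary C]{Jin:2007}.

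With $G\cong\Gamma$ in hand, I define $\theta$ by transporting along $\Phi$ the map $\Theta\colon\Gamma\to\Gamma$, $\Theta(\alpha,q):=(\rho\alpha\rho^{-1},q)$. This is well defined precisely because $\Inn(N)\rho\in Z(\Out(N))$: for $(\alpha,q)\in\Gamma$ one has $\overline{\rho\alpha\rho^{-1}}=\bar{\rho}\,\bar{\alpha}\,\bar{\rho}^{-1}=\bar{\alpha}=\omega(q)$, so $(\rho\alpha\rho^{-1},q)\in\Gamma$. Since $\Theta$ is visibly a bijective homomorphism (with inverse $(\alpha,q)\mapsto(\rho^{-1}\alpha\rho,q)$), the map $\theta:=\Phi^{-1}\Theta\Phi$ is an automorphism of $G$. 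To check the three conclusions, note that under $\Phi$ the subgroup $N$ corresponds to $\{(\iota_{n},1_{Q}):n\in N\}$, and $\Theta$ sends $(\iota_{n},1_{Q})$ to $(\iota_{\rho(n)},1_{Q})$; hence $\theta$ normalizes $N$ with $\theta_{N}=\rho$, giving (2) and (3). For (1), since $\Theta^{k}(\alpha,q)=(\rho^{k}\alpha\rho^{-k},q)$, the equation $\Theta^{k}=\mathrm{id}$ forces $\rho^{k}$ to commute with every first coordinate occurring in $\Gamma$, in particular with all of $\Inn(N)$ (each $\iota_{n}$ occurring paired with $q=1_{Q}$); as $C_{\Aut(N)}(\Inn(N))=1$ when $Z(N)=1$, this forces $\rho^{k}=1$, while conversely $\rho^{k}=1$ clearly gives $\Theta^{k}=\mathrm{id}$. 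Thus $\left|\theta\right|=\left|\Theta\right|=\left|\rho\right|$.

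The main obstacle is the surjectivity of $\Phi$: a priori, defining $\theta(g)$ amounts to choosing an element of $G$ whose conjugation action on $N$ is $\rho\,\phi(g)\,\rho^{-1}$, and such an element is determined only modulo $C_{G}(N)$. The fiber product $\Gamma$ packages these ambiguous choices into a single coherent object, and the two hypotheses are exactly what make this work: $Z(N)=1$ guarantees that $\Phi$ is a faithful and surjective encoding of $G$, while $\Inn(N)\rho\in Z(\Out(N))$ guarantees that the twist $\Theta$ preserves $\Gamma$ (equivalently, that $\rho$ normalizes the image $\phi(G)\leq\Aut(N)$).
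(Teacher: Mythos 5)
Your proof is correct, and it takes a genuinely different route from the paper, which offers no argument of its own: the paper simply quotes the statement as a special case of Jin's Corollary C \cite{Jin:2007}, remarking that one could instead use standard group cohomology (\cite[Chapter IV]{Brown:1994}, Corollary IV.6.8). Your fiber-product construction makes that remark concrete and self-contained: the map $\Phi\colon g\mapsto(\phi(g),gN)$ into the pullback $\Gamma$ of $\Aut(N)\to\Out(N)\leftarrow G/N$ is injective because $\ker\Phi=C_{G}(N)\cap N=Z(N)=1$, your surjectivity argument (adjust a lift of $q$ by the element of $N$ realizing the inner discrepancy) is complete and needs no cohomology, the restriction computation $\Theta(\iota_{n},1_{Q})=(\iota_{\rho(n)},1_{Q})$ correctly yields conclusions (2) and (3), and your order computation for (1) correctly invokes $C_{\Aut(N)}(\Inn(N))=1$ when $Z(N)=1$ --- a point that is easy to overlook and that Jin's Corollary C supplies for free. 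What your approach buys is an elementary, fully verifiable proof of exactly the case needed (centerless $N$, central outer class); what the citation buys is generality and brevity, since Jin's theorem covers situations your rigidity argument does not. One inaccuracy worth flagging, though it is confined to an aside: your closing parenthetical claiming that well-definedness of $\Theta$ is ``equivalently, that $\rho$ normalizes the image $\phi(G)\leq\Aut(N)$'' is wrong as an equivalence. Preserving $\Gamma$ with the \emph{second coordinate fixed} requires $\Inn(N)\rho$ to \emph{centralize} the image $\omega(Q)\leq\Out(N)$ (which centrality in $Z(\Out(N))$ gives, with room to spare); merely normalizing $\phi(G)$ would permit $\Theta$ to move the fiber over $q$ to the fiber over some other $q'$, and then the second coordinate in your formula would be incorrect. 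Since the actual verification in your second paragraph uses centrality correctly, this affects only the commentary, not the proof.
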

Applying Lemma~\ref{lem:ExtendAut} to the involution $\rho$ described
in Lemma~\ref{lem:AutomOfN}, we obtain the following corollary.
\begin{cor}
\label{lem:A7toAutG}Suppose that the group $G$ has a normal subgroup
$N=\prod_{i=1}^{t}L_{i}$ with each $L_{i}\cong A_{7}$. If $\rho\in\Aut(N)$
is as in Lemma~\ref{lem:AutomOfN}, then there exists an involution
$\theta\in\Aut(G)$ such that $\theta$ normalizes $N$ and $\theta_{N}=\rho$.\end{cor}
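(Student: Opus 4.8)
The plan is to produce $\theta$ by applying Jin's extension result, Lemma~\ref{lem:ExtendAut}, to the involution $\rho$ supplied by Lemma~\ref{lem:AutomOfN}. That lemma yields an automorphism $\theta\in\Aut(G)$ with $\left|\theta\right|=\left|\rho\right|$ that normalizes $N$ and satisfies $\theta_{N}=\rho$, under two hypotheses: that $Z(N)=1$, and that the coset $\Inn(N)\rho$ lies in $Z(\Out(N))$. Since $\rho$ is an involution, the conclusion $\left|\theta\right|=\left|\rho\right|=2$ immediately makes $\theta$ an involution, and the other two conclusions are precisely the assertions of the corollary. So the entire task reduces to verifying these two hypotheses; the corollary itself is essentially bookkeeping on top of the cited machinery.

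The first hypothesis is immediate: each $L_{i}\cong A_{7}$ is nonabelian simple and hence centerless, so $Z(N)=Z(A_{7}^{t})=1$. For the second, I would first record the structure of $\Out(N)$. Because $A_{7}$ is nonabelian simple, every automorphism of $N$ permutes the factors $L_{1},\dots,L_{t}$, giving the standard isomorphism $\Aut(N)\cong\Aut(A_{7})\wr S_{t}$ under which $\Inn(N)$ corresponds to $\Inn(A_{7})^{t}$. Passing to the quotient yields $\Out(N)\cong\Out(A_{7})\wr S_{t}$, and since $\Out(A_{7})\cong\zz/2$ this is $(\zz/2)^{t}\semidirect S_{t}$, with $S_{t}$ permuting the $t$ coordinates of the base group $(\zz/2)^{t}$.

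It remains to locate $\bar\rho:=\Inn(N)\rho$ inside $\Out(N)$ and check that it is central; this identification is the one step that genuinely requires care, and is the expected main obstacle. By Lemma~\ref{lem:AutomOfN}~(1) the automorphism $\rho$ normalizes each $L_{i}$, so its image has trivial $S_{t}$-component and thus lies in the base group $(\zz/2)^{t}$; and by the accompanying remark $\rho$ acts on every factor by conjugation by the odd permutation $(1,2)(3,4)(5,6)$, inducing the same nontrivial element of $\Out(A_{7})$ in each coordinate. Hence $\bar\rho$ is the constant (``diagonal'') tuple in $(\zz/2)^{t}$. Such a tuple is fixed by every coordinate permutation in $S_{t}$, so it commutes with the complement $S_{t}$, and being an element of the abelian base group it commutes with all of $(\zz/2)^{t}$; as these two subgroups generate $\Out(N)$, we obtain $\bar\rho\in Z(\Out(N))$. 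With both hypotheses verified, Lemma~\ref{lem:ExtendAut} delivers the desired involution $\theta$, completing the proof.
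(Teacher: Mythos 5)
Your proposal is correct and takes essentially the same route as the paper's own proof: both verify $Z(N)=1$, identify $\Out(N)\cong\zz_{2}\wr S_{t}$, observe that $\Inn(N)\rho$ is the central diagonal involution (using that $\rho$ normalizes each $L_{i}$ and restricts to each factor as the outer automorphism induced by the odd permutation $(1,2)(3,4)(5,6)$), and then invoke Lemma~\ref{lem:ExtendAut}. Your version merely spells out explicitly the centrality of the diagonal tuple, which the paper leaves implicit.
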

\begin{proof}
Note first that $\Aut(A_{7})\cong S_{7}$ (see for example \cite[Section 8.2]{Dixon/Mortimer:1996}).
It follows that $\Aut(N)\cong S_{7}\wr S_{t}$ (this is \cite[Exercise 4.3.9]{Dixon/Mortimer:1996}).
As $Z(N)=1$, it follows in turn that $\Inn(N)\cong N$ and $\Out(N)\cong\mathbb{Z}_{2}\wr S_{t}$.
Therefore $\Out(N)$ has a central element $z$ of order $2$. 

Let $\varphi\in\Aut(N)$. The coset $\Inn(N)\varphi$ is equal to
$z$ if and only if the conditions
\begin{itemize}
\item [(a)] $\varphi$ normalizes $L_{i}$, and
\item [(b)] the restriction of $\varphi$ to $L_{i}$ is not in $\Inn(L_{i})$
\end{itemize}
are satisfied for each $i\in[t]$. The automorphism $\rho$ meets
conditions (a) and (b). The conclusion now follows from Lemma~\ref{lem:ExtendAut}.
\end{proof}
We are ready to complete the proof of Lemma~\ref{lem:a7}. Suppose
that the group $G$ has a minimal normal subgroup $N=\prod_{i=1}^{t}L_{i}$
with each $L_{i}\cong A_{7}$. The automorphism $\theta$ obtained
in Corollary~\ref{lem:A7toAutG} normalizes both groups $P,K$ described
in Lemma~\ref{lem:AutomOfN}. Using the componentwise action of $\theta$
on $P\times K$, we obtain the semidirect product 
\[
E:=(P\times K)\semidirect\left\langle \theta\right\rangle \leq(G\times G)\semidirect\Aut(G).
\]
 The group $E$ acts on $\cosetlat(G)$ as described in Section~\ref{sec:SmithTheory}.
Since $\theta$ normalizes $N$, this action restricts to $\cosetlat(G,N)$.
Since $\left|\theta\right|=2$, the group $E$ meets the conditions
of Theorem~\ref{thm:smith} (as discussed in Remark \ref{rem:SmithAppl}).
It thus suffices to show that $\cosetlat(G,N)^{E}=\emptyset$. 

Assume for contradiction that $Hx\in\cosetlat(G,N)^{E}$. Then $H$
contains $\left\langle P,K^{x^{-1}}\right\rangle $ and is normalized
by $\theta$. In particular, for each $i\in[t]$, the intersection
$H\cap L_{i}$ is normalized by $\theta$ and contains $P_{i}$. Moreover,
the projection of $H\cap N$ to $L_{i}$ (which contains $H\cap L_{i}$
as a normal subgroup) is normalized by $\theta$ and contains $\left\langle h_{i}\right\rangle $.
By Lemma~\ref{lem:AutomOfN}~(3), this projection is $L_{i}$. As
$A_{7}$ is simple and $H\cap L_{i}$ is nontrivial, it follows that
$H\cap L_{i}=L_{i}$. Therefore $N\leq H$. This is impossible, as
$H<G$ and $HN=G$.

\section{\label{sec:LieSporadic}Simple groups of Lie type and sporadic groups}

Here we prove Claim~(3) of Lemma~\ref{lem:UniversalGeneration}.
We refer the reader to \cite[Chapter 2]{Gorenstein/Lyons/Solomon:1998}
for an introduction to the finite simple groups of Lie type, with
original references. Each such group is determined by an irreducible
crystallographic root system $\Sigma$, a (possibly trivial) automorphism
$\sigma$ of the Dynkin diagram of $\Sigma$ and a finite field $\ff_{q}$
of order $q$. If $\sigma$ has order $d$, we say that the \textit{type}
of the associated simple group is $^{d}\Sigma(q)$, suppressing $d$
when $d=1$. Much of what we need has already been proved by Damian
and Lucchini in \cite[Section 4]{Damian/Lucchini:2007}. We summarize
their results as follows.
\begin{thm}[Damian and Lucchini \cite{Damian/Lucchini:2007}]
 \label{thm:DamianLucchini} If $L$ is a finite simple group of
Lie type or a sporadic simple group, then one of the following conditions
holds. 
\begin{enumerate}
\item [(a)] There is some cyclic subgroup $C\leq L$ of prime order that
universally $2$-generates $L$. 
\item [(b)] The group $L$ is of Lie type $B_{n}(q)$ $(n\geq3)$, $D_{n}(q)$
$(n\geq4)$, or $G_{2}(q)$, and $q$ is odd. 
\item [(c)] The group $L$ is of Lie type $A_{5}(2)$, $C_{3}(2)$, $D_{4}(2)$
or $^{2}A_{3}(2)$. 
\item [(d)] The group $L$ is the McLaughlin sporadic group $McL$. 
\end{enumerate}
\end{thm}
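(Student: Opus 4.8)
The plan is to recover this statement from the work of Damian and Lucchini by first recasting condition~(a) in the language of maximal subgroups of odd index. Recall from Definition~\ref{def:Univpgen} that a cyclic subgroup $C=\langle c\rangle$ of odd prime order universally $2$-generates $L$ exactly when $\langle C,P\rangle=L$ for every Sylow $2$-subgroup $P$. A maximal subgroup $M<L$ contains a full Sylow $2$-subgroup of $L$ precisely when $[L:M]$ is odd. Hence, if $c$ lies in some maximal subgroup $M$ of odd index, then choosing $P\leq M$ gives $\langle C,P\rangle\leq M<L$, so $C$ fails to universally $2$-generate $L$. Conversely, if $c$ lies in no maximal subgroup of odd index, then $\langle C,P\rangle$ contains $P$ and so has odd index in $L$; were it proper it would lie in a maximal subgroup of odd index containing $c$, a contradiction, so $\langle C,P\rangle=L$. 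Thus condition~(a) is equivalent to the assertion that $L$ possesses an element of prime order lying outside every maximal subgroup of odd index, and the theorem becomes a dichotomy: either such an element exists, or $L$ belongs to one of the explicit exceptional families (b), (c), (d). This reformulation is in the spirit of Remark~\ref{rem:UnivSylpr}.

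With this reduction in hand, I would treat the generic case using primitive prime divisors. For $L$ of Lie type over $\ff_q$ I would select a maximal torus $T$ whose order is divisible by a primitive prime divisor $p$ of $q^{e}-1$ for a degree $e$ making $T$ irreducible (for instance a Coxeter torus). An element $c\in T$ of order $p$ is then regular semisimple and lies in no proper parabolic subgroup. Invoking the classification of maximal subgroups of odd index in finite simple groups, one checks that for $L$ outside the families in (b) and (c), every maximal subgroup of odd index is either parabolic or else of order coprime to $p$, so $c$ avoids all of them and (a) holds. For the twenty-six sporadic groups I would instead read off a suitable conjugacy class of prime order directly from the maximal-subgroup data in \cite{Atlas}, verifying with \cite{GAP4.4.12} that its elements meet no maximal subgroup of odd index; this succeeds for every sporadic group except $McL$, which is collected as case~(d).

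The exceptional families arise exactly where the Zsygmondy argument degenerates. For odd $q$ the types $B_{n}(q)$ $(n\geq3)$, $D_{n}(q)$ $(n\geq4)$ and $G_{2}(q)$ carry enough maximal subgroups of odd index—of reducibility (orthogonal) type and their $G_{2}$ analogues—that every element of prime order is conjugate into one of them, so no prime-order cyclic subgroup can universally $2$-generate $L$; these form case~(b). The groups $A_{5}(2)$, $C_{3}(2)$, $D_{4}(2)$ and $^{2}A_{3}(2)$ fail because $q=2$ is too small to furnish the needed primitive prime divisor (already $2^{6}-1$ has none), and are gathered as case~(c). In each exceptional case one still expects an odd prime whose Sylow subgroup, rather than a prime-order element, universally $2$-generates $L$, but that finer analysis belongs to the treatment of (b), (c), (d) that follows and is not part of this theorem.

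The main obstacle is the uniform Lie-type bookkeeping: one must pair the classification of odd-index maximal subgroups with the torus-and-Zsygmondy analysis family by family, and in doing so isolate precisely the types in (b) and the small cases in (c) where an element of prime order necessarily meets a maximal subgroup of odd index. This delicate casework is exactly what Damian and Lucchini carry out in \cite[Section~4]{Damian/Lucchini:2007}, and rather than reproduce it I would quote their results, using the reformulation above to translate their conclusions into the statement about universal $2$-generation recorded here.
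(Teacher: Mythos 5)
Your proposal is correct and takes essentially the same route as the paper: the paper gives no independent proof of Theorem~\ref{thm:DamianLucchini}, presenting it explicitly as a summary of the results of \cite[Section~4]{Damian/Lucchini:2007}, which is exactly the source you ultimately quote for all the casework. Your preliminary reformulation of condition~(a) --- that a prime-order cyclic subgroup $\langle c\rangle$ universally $2$-generates $L$ if and only if $c$ lies in no maximal subgroup of odd index --- is correct and is just the prime-order analogue of Remark~\ref{rem:UnivSylpr}, so your added sketch (Zsygmondy primes, tori, Atlas checks) is a plausible gloss on the cited work rather than a divergence from the paper's treatment.
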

In fact, Damian and Lucchini give further restrictions on $L$ when
condition (b) of Theorem~\ref{thm:DamianLucchini} holds, but we
will not need these.

As every Sylow $p$-subgroup of a group $G$ contains a conjugate
of every element of order $p$ in $G$, it remains to examine the
groups listed in cases (b), (c) and (d) of Theorem~\ref{thm:DamianLucchini}. 

Suppose $L$ is a simple group of type $^{d}\Sigma(q)$ and $q$ is
a power of the prime $p$. We say that $L$ has \textit{characteristic}
$p$ and call a subgroup $M\leq L$ \textit{parabolic} if $M$ contains
the normalizer of some Sylow $p$-subgroup of $L$. The following
result is attributed to Tits by Seitz in \cite[(1.6)]{Seitz:1973}.
\begin{lem}[Tits]
\label{lem:tits} Let $L$ be a simple group of Lie type in characteristic
$p$, and let $P$ be a Sylow $p$-subgroup of $L$. Every maximal
subgroup of $L$ containing $P$ is parabolic. 
\end{lem}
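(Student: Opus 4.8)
The plan is to deduce the statement from the Borel--Tits theorem, which is the natural engine for controlling $p$-local subgroups of a group of Lie type in its defining characteristic. First I would recall the relevant structure theory. Since $L$ carries a split $BN$-pair of characteristic $p$, a Sylow $p$-subgroup $P$ of $L$ is the unipotent radical of a Borel subgroup $B$, and in fact $B=N_{L}(P)$. Moreover the parabolic subgroups of $L$ are precisely the overgroups of Borel subgroups, so a subgroup is parabolic in the sense used here (contains $N_{L}(P')$ for some Sylow $p$-subgroup $P'$) exactly when it contains a Borel subgroup. Thus the goal reduces to showing that a maximal subgroup $M$ with $P\leq M$ must contain a full Borel subgroup.

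The key tool is the Borel--Tits theorem in the following form: if $V$ is a nontrivial $p$-subgroup of $L$, then $N_{L}(V)$ is contained in a proper parabolic subgroup $Q$ (indeed one may take $Q$ with $V\leq O_{p}(Q)$). Granting this, the finishing argument is short. I would set $V:=O_{p}(M)$, the largest normal $p$-subgroup of $M$. If $V\neq 1$, then $M\leq N_{L}(V)\leq Q$ for some proper parabolic $Q$; since $M$ is maximal and $Q\neq L$, this forces $M=Q$, so $M$ is parabolic as desired.

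Everything therefore hinges on showing $O_{p}(M)\neq 1$, and this is the step I expect to be the main obstacle. Note that $P$, being a Sylow $p$-subgroup of $L$ contained in $M$, is also a Sylow $p$-subgroup of $M$, so $O_{p}(M)=\bigcap_{m\in M}P^{m}$; the claim is that this intersection is nontrivial. This is a genuinely Lie-theoretic fact rather than a formal one, since for an arbitrary finite group a maximal subgroup containing a full Sylow $p$-subgroup can have trivial $p$-core, so any proof must exploit the structure of $L$. The cleanest route I see is again through Borel--Tits, combined with the observation that $L$ is generated by $P$ together with an opposite unipotent subgroup $P^{-}$: assuming $O_{p}(M)=1$, one tries to produce a nontrivial $p$-subgroup normalized by all of $M$ (for instance by analyzing the $M$-conjugates of $Z(P)\neq 1$ and the parabolics they determine), thereby contradicting either $O_{p}(M)=1$ or the properness of $M$. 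Since establishing $O_{p}(M)\neq 1$ is essentially the content of Tits's theorem, in the write-up I would either carry out this unipotent-subgroup analysis in detail or simply invoke \cite[(1.6)]{Seitz:1973}.
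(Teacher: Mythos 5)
The first thing to say is that the paper contains no proof of this lemma: it is stated with an attribution to Tits via \cite[(1.6)]{Seitz:1973} and used as a black box. So your fallback option --- ``simply invoke \cite[(1.6)]{Seitz:1973}'' --- coincides exactly with what the paper does and is perfectly legitimate; but if you take it, the entire Borel--Tits scaffolding you erect is redundant, since Seitz's (1.6) \emph{is} the full lemma, not merely your missing step.

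If instead you intend a self-contained proof, there is a genuine gap, and it sits precisely where you locate it. Your reduction is correct as far as it goes: the paper's notion of parabolic (overgroup of $N_{L}(P')$ for some Sylow $p$-subgroup $P'$) agrees with the usual one because $B=N_{L}(U)$ in a simple group of Lie type, and if $O_{p}(M)\neq1$ then $M\leq N_{L}(O_{p}(M))$ lies in a proper parabolic $Q$, forcing $M=Q$ by maximality. But note that this reduction is an equivalence rather than progress: every proper parabolic has nontrivial unipotent radical, so, granting the (finite) Borel--Tits theorem --- itself a result of depth comparable to Tits's lemma, see \cite[Theorem 3.1.3]{Gorenstein/Lyons/Solomon:1998} --- the claim ``$O_{p}(M)\neq1$ for every maximal $M\geq P$'' is simply a restatement of the lemma being proved. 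Your proposed route to it, ``analyzing the $M$-conjugates of $Z(P)$ and the parabolics they determine,'' is a gesture rather than an argument: each conjugate $Z(P)^{m}$ determines its own parabolic via Borel--Tits, and nothing in the sketch explains how this family yields a single nontrivial $p$-subgroup normalized by all of $M$, which is what contradicting $O_{p}(M)=1$ requires. Your own observation that the claim fails in general finite groups shows the step cannot be formal --- indeed the standard counterexample is the one driving Section~\ref{sec:A7} of this very paper: $PGL_{3}(2)$ is maximal of odd index $15$ in $A_{7}$, hence contains a full Sylow $2$-subgroup, yet is simple and so has trivial $2$-core. Any honest completion of your argument would have to exploit the split $BN$-pair (or building) structure directly, at which point one is essentially reproving \cite[(1.6)]{Seitz:1973}; given that the paper cites it without proof, doing the same is the right call, and the cleaner write-up is the one-line citation without the intermediate Borel--Tits detour.
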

The groups of types $A_{5}(2)$, $C_{3}(2)$, $D_{4}(2)$ and $^{2}A_{3}(2)$
are all classical groups. The orders of parabolic subgroups of classical
groups are known, and can be found in \cite{Kleidman/Liebeck:1990}.
It is straightforward to confirm that the index of each parabolic
subgroup of each of the four given groups is divisible by three. The
same holds for the index of each maximal subgroup of odd index in
$McL$, as can be confirmed by consulting \cite{Atlas}. We obtain
the following result.
\begin{lem}
\label{lem:cd} If the simple group $L$ is listed in case (c) or
case (d) of Theorem~\ref{thm:DamianLucchini}, then $L$ is universally
$2$-generated by any of its Sylow $3$-subgroups.
\end{lem}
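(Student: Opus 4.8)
The plan is to reduce the statement to a finite index computation about parabolic subgroups, exploiting Lemma~\ref{lem:tits} and Remark~\ref{rem:UnivSylpr}. Recall from Remark~\ref{rem:UnivSylpr} that a Sylow $3$-subgroup of $L$ universally $2$-generates $L$ precisely when every maximal subgroup of $L$ has index divisible by $2$ or by $3$. So the entire content of the lemma is to verify this index condition for the four groups of type $A_{5}(2)$, $C_{3}(2)$, $D_{4}(2)$, $^{2}A_{3}(2)$ in case (c), and for $McL$ in case (d).

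First I would dispose of the maximal subgroups of even index, which trivially satisfy the condition. This splits the maximal subgroups into those of even index (nothing to check) and those of odd index. For the four classical groups in case~(c), each has characteristic $p=2$, so by Lemma~\ref{lem:tits} every maximal subgroup containing a Sylow $2$-subgroup is parabolic. A maximal subgroup $M$ has odd index exactly when $M$ contains a full Sylow $2$-subgroup, and hence exactly when $M$ is parabolic. So for these groups it suffices to check that each parabolic subgroup has index divisible by $3$. For $McL$ I would instead consult the \cite{Atlas} list of maximal subgroups of odd index directly and confirm the same divisibility.

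The key step is therefore the index computation. For the classical groups I would invoke the known orders of parabolic subgroups recorded in \cite{Kleidman/Liebeck:1990}: each parabolic arises as the stabilizer of an isotropic (or totally singular) subspace of the natural module, and its index is an explicit Gaussian-binomial-type product of factors $(q^{a}-1)/(q^{b}-1)$ evaluated at $q=2$. One checks case by case (using the small rank and $q=2$) that each such index is a multiple of $3$; since $q=2$ the factor $q^{2}-1=3$ appears, which makes the divisibility by $3$ essentially automatic in each parabolic index. For $McL$ the finitely many odd-index maximal subgroups are listed in the \cite{Atlas}, and one reads off that each index is divisible by $3$.

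The main obstacle, such as it is, is purely bookkeeping: one must correctly enumerate the parabolic subgroups (equivalently the subsets of nodes of the Dynkin diagram, accounting for the graph automorphism $\sigma$ in the twisted case $^{2}A_{3}(2)$) and compute each index without error. There is no conceptual difficulty once Lemma~\ref{lem:tits} has reduced the problem to parabolics; the reduction to odd-index maximal subgroups via Remark~\ref{rem:UnivSylpr} and the characteristic-$2$ observation is what makes the verification a finite, mechanical check. Having confirmed in every case that each relevant maximal subgroup has index divisible by $3$, Remark~\ref{rem:UnivSylpr} yields that a Sylow $3$-subgroup of $L$ universally $2$-generates $L$, completing the proof.
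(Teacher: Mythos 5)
Your proposal is correct and follows essentially the same route as the paper: both reduce the problem via Remark~\ref{rem:UnivSylpr} and Lemma~\ref{lem:tits} to checking that every parabolic subgroup of the four characteristic-$2$ classical groups has index divisible by $3$ (using the parabolic orders in \cite{Kleidman/Liebeck:1990}), and handle $McL$ by consulting the \cite{Atlas} list of odd-index maximal subgroups. Your added observation that odd index is equivalent to containing a full Sylow $2$-subgroup, and that the factor $q^{2}-1=3$ at $q=2$ drives the divisibility, merely makes explicit what the paper leaves as a straightforward verification.
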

It remains to handle case (b). Key to the work of Damian and Lucchini
in \cite{Damian/Lucchini:2007} is a result of Liebeck and Saxl in
\cite{Liebeck/Saxl:1985}, in which the authors describe all primitive
permutation groups of odd degree. (See also the paper \cite{Kantor:1987}
of Kantor, in particular Lemma~2.3 therein.) Such a description necessarily
includes a list of all pairs $(M,L)$ such that $L$ is a finite simple
group and $M$ is a maximal subgroup of odd index in $L$. Consulting
this list, we obtain the following result.
\begin{lem}[{Liebeck and Saxl \cite{Liebeck/Saxl:1985}; see also \cite[Lemma 2.3]{Kantor:1987}}]
\label{lem:oddpar} If $L$ is a simple group of Lie type in odd
characteristic and some parabolic subgroup of $L$ contains a Sylow
$2$-subgroup of $L$, then the type of $L$ is one of $A_{n}(q)$
or $E_{6}(q)$. 
\end{lem}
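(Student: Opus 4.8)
The plan is to combine Lemma~\ref{lem:tits} with the Liebeck--Saxl classification of primitive permutation groups of odd degree. By Lemma~\ref{lem:tits}, any maximal subgroup of $L$ containing a Sylow $2$-subgroup is parabolic; conversely the hypothesis singles out a parabolic $M$ containing such a Sylow subgroup. Since $[L:M]$ is the index of a parabolic, and $M$ contains a Sylow $2$-subgroup of $L$, this index is odd. Thus $M$ is a maximal subgroup of odd index in the simple group $L$, and the pair $(L,M)$ must appear on the Liebeck--Saxl list. The entire task therefore reduces to reading off, from that list, which simple groups of Lie type in \emph{odd} characteristic admit a \emph{parabolic} maximal subgroup of odd index.

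The key steps, in order, are as follows. First I would note that the Liebeck--Saxl theorem enumerates all pairs $(L,M)$ with $L$ simple and $M$ maximal of odd index; equivalently, all primitive actions of almost simple groups on a set of odd size. Second, I would restrict attention to those entries where $L$ is of Lie type in odd characteristic $p$ and the listed maximal subgroup $M$ is parabolic (rather than a subfield, classical, or other geometric subgroup). Since a parabolic subgroup has index a sum involving powers of $q$, and $q$ is odd, oddness of the index $[L:M]$ is a genuine restriction: for most types of Lie rank at least $2$, every proper parabolic has \emph{even} index in odd characteristic. Third, I would verify that the only types surviving this constraint are $A_n(q)$ and $E_6(q)$, which is exactly what the list records (for $A_n(q)$ the stabilizer of a point or a maximal totally singular/isotropic subspace can have odd index; $E_6(q)$ occurs as the remaining exceptional-type entry).

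I do not expect to grind through the individual index computations, since these are subsumed in the cited classification; the substance of the proof is correctly invoking Lemma~\ref{lem:tits} to guarantee that the relevant subgroup is parabolic (so that only parabolic entries of the Liebeck--Saxl list are relevant) and then extracting the two surviving types. The main obstacle is purely bookkeeping: ensuring that the appeal to \cite{Liebeck/Saxl:1985} (or \cite[Lemma~2.3]{Kantor:1987}) is to the portion of their classification dealing specifically with parabolic maximal subgroups of odd index in odd characteristic, and confirming that no additional exceptional or low-rank family sneaks in. Because Lemma~\ref{lem:tits} lets us ignore all nonparabolic maximal subgroups, the statement follows directly once the list is consulted, with no further simple-group-theoretic input required.
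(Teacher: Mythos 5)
Your route is the same as the paper's: the proof is nothing more than observing that the hypothesis yields a maximal subgroup of odd index in $L$ and then consulting the Liebeck--Saxl classification \cite{Liebeck/Saxl:1985} (or \cite[Lemma~2.3]{Kantor:1987}) of primitive permutation groups of odd degree, which in particular lists all pairs $(M,L)$ with $M$ maximal of odd index in the simple group $L$; restricting to parabolic $M$ in odd characteristic leaves exactly the types $A_{n}(q)$ and $E_{6}(q)$. Two small repairs are needed in your write-up, though. First, your opening appeal to Lemma~\ref{lem:tits} is misstated: Tits's lemma concerns Sylow subgroups in the \emph{defining} characteristic $p$, which here is odd, so it is simply false that every maximal subgroup of $L$ containing a Sylow $2$-subgroup is parabolic --- indeed, the Liebeck--Saxl list consists largely of \emph{nonparabolic} odd-index maximal subgroups, and Lemma~\ref{lem:tits} enters only \emph{after} Lemma~\ref{lem:oddpar}, when the two are combined to dispose of case~(b) of Theorem~\ref{thm:DamianLucchini}. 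Fortunately you never actually use this false statement, since the hypothesis already hands you a parabolic containing a Sylow $2$-subgroup. Second, that parabolic need not be maximal, so before invoking the classification you should pass to a maximal subgroup $M^{*}$ containing it; this is harmless, because $M^{*}$ contains the normalizer of a Sylow $p$-subgroup (as the parabolic does, by the paper's definition) and hence is itself parabolic, while $[L:M^{*}]$ divides the odd index of the parabolic and so is odd. (A cosmetic slip: for type $A_{n}(q)$ the parabolics are stabilizers of arbitrary subspaces, not of totally isotropic ones --- there is no form in linear type.) With these adjustments your argument coincides with the paper's, which likewise offers no index computations beyond reading the cited list.
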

Consulting \cite[Theorem 2.2.10]{Gorenstein/Lyons/Solomon:1998},
we see that, assuming the lower bounds on $n$ listed in case (b),
there exists no isomorphism between a group of type $B_{n}(q)$, $D_{n}(q)$
or $G_{2}(q)$ with $q$ odd and a group of type $A_{n}(q)$ or $E_{6}(q)$.
Thus Lemmas~\ref{lem:tits} and \ref{lem:oddpar} together complete
our proof of Claim~(3).

\bibliographystyle{1_Users_russw_Documents_Research_mypapers_Coset_posets_are_noncontractible_hamsplain}
\bibliography{0_Users_russw_Documents_Research_mypapers_Coset_posets_are_noncontractible_Master}

\end{document}